\theoremstyle{plain}
\newtheorem{thm}{\bfseries Theorem}[section]
\newtheorem{lem}[thm]{\bfseries Lemma}
\newtheorem{rem}[thm]{\bfseries Remark}
\newtheorem{prop}[thm]{\bfseries Proposition}
\date{}
\title{Tangent to Bloch-Suslin and Grassmannian Complexes over the dual numbers}
\author{Raziuddin Siddiqui \thanks{email: rdsiddiqui@fuuast.edu.pk} \\ \small \textit{Mathematical Sciences Research Centre}\\ 
\small \textit{Federal Urdu University, Karachi.} }
\begin{document}
\maketitle
\begin{abstract}
In this article, we extend Siegel's cross-ratio identity for $2\times2$ determinants over the truncated polynomial ring $F[\varepsilon]_\nu:=F[\varepsilon]/\varepsilon^\nu$. We compute cross-ratios and Goncharov's triple-ratios in $F[\varepsilon]_2$ and $F[\varepsilon]_3$ and use them extensively in our computations for the tangential complexes. We also verify a ''projected five-term'' relation in the group $T\mathcal{B}_2(F)$ which is crucial to prove one of our central statements that describe the morphisms between tangent complex  and Grassmannian complex
\end{abstract}

\textbf{Keywords:} Tangent complex, dual numbers, cross-ratio identity, triple ratio
\section{Introduction:}
%We elaborate on similar cross-ratio constructions in the tangential case, where instead of $F$ we are working over the ring of dual numbers  $F[\varepsilon]_2:=F[\varepsilon]/\varepsilon^2$. 
At first, %for $(l^*_0,\ldots,l^*_3)\in C_4(\varmathbb{A}^2_{F[\varepsilon]_{\nu}})$, 
we present an analogue to Siegel cross-ratio identity \cite{Sieg} for $2\times2$-determinants $\Delta(l^*_i,l^*_j),0\leq i<j\leq3$ for vectors in $(l^*_0,\ldots,l^*_3)\in C_4(\varmathbb{A}^2_{F[\varepsilon]_{\nu}})$ where $C_4(\varmathbb{A}^2_{F[\varepsilon]_{\nu}})$ is the free abelian group generated by the configurations of four vectors in two dimensional affine space over the truncated polynomial ring $F[\varepsilon]_{\nu}$ (see Lemma \ref{lemid} and equations (\ref{genid}) and (\ref{newid})) which is the analogue of (\ref{2did}), and consider their cross-ratios as an element over the truncated polynomial ring $F[\varepsilon]_{\nu}$, i.e., we consider the following cross ratio, $\emph{\textbf{r}}(l^*_0,\ldots,l^*_3)=\left(r_{\varepsilon^0}\varepsilon^0+r_{\varepsilon^1}\varepsilon^1+\cdots+r_{\varepsilon^{\nu-1}}\varepsilon^{\nu-1}\right)(l^*_0,\ldots,l^*_3)$, where $r_{\varepsilon^0}$ is the usual cross-ratio of four points in $\varmathbb{A}^2_F$, while the other elements of $\emph{\textbf{r}}$ are computed in $\S$\ref{cr_Feps}. We introduce a similar construction for the triple-ratio as well (see $\S$\ref{gen_cross}).

Due to this analogue of cross-ratios, we are able to find morphisms between the Grassmannian subcomplex $C_*(\varmathbb{A}^n_{F[\varepsilon]_2},d)$ for $n=2,3$ and the tangent complexes to the Bloch-Suslin and the Goncharov complexes (see $\S$\ref{tbsdb} and $\S$\ref{tbstb}). We also produce result for the projected five-term relation in $T\mathcal{B}_2(F)$ (see Lemma \ref{5pt}) which is analogous to  Goncharov's projected five-term relation in $\mathcal{B}_2(F)$ (see Lemma 2.18 of \cite{Gonc}) and very helpful for the proof of our main result (\ref{claim4b}). 
%In appendix \ref{sec_order_ch}, we provide a different way to look the tangential complexes to Bloch-Suslin and Goncharov's complexes, especially when one wants to look elements in $F[\varepsilon]_3$.

%We rewrite the complex of $F$-vector space $\beta_n(F)$ for any $n$ which was first introduced by Goncharov (see $\S$2 of \cite{Gonc3}). We found some misprint in map $\partial$ there and we suggested slightly different map for $\partial$ for which the sequence become a complex (see Lemma \ref{betancomp}). We also write the complex for $\beta_2^D(F)$ for any $n$ which is a variant of $\beta_n(F)$ by using map $\partial^D$ (an aviator of $\partial$) in $\S$\ref{def1}. 

In $\S$\ref{tbstb}, we provide a possible definition of a group $T\mathcal{B}_3(F)$ which was first defined hypothetically in $\S$9 of \cite{Cath3}. On the basis of our definition, we mimic this construction  with the $F$-vector space $\beta_3^D(F)$ (\cite{SiDs}) and reproduce Cathelineau's 22-term functional equation for $T\mathcal{B}_3(F)$.

Here, we will discuss and try to write geometric configurations for the tangent complex to Bloch-Suslin complex and to Goncharov's complex (see $\S$3 of \cite{Gonc1}). This article will also introduce cross-ratios and identities of determinants for the configurations of vectors in 
$C_m(\varmathbb{A}^{n}_{F[\varepsilon]_\nu})$ for $n=2,3$, $m=3,\ldots,7$ and $\nu\geq1$ ($\nu=1$ is the usual case see \cite{SiDs} and $\S$\ref{con_w_cr} below). 

One of our main results is Theorem \ref{claim4b}. In its proof we shall use combinatorial techniques and will rewrite the triple-ratio as the product of two projected cross-ratios in $F[\varepsilon]_2$.
\section{Configurations of points in  $C_{m}(\varmathbb{A}^n_{F[\varepsilon]_{\nu}})$}\label{con_w_cr}
Let $F$ be a field of characteristic 0. For $\nu\geq1$, we denote the $\nu$th truncated polynomial ring over $F$ by  $F[\varepsilon]_\nu:=F[\varepsilon]/\varepsilon^\nu$. Further define the abelian group $C_m(\varmathbb{A}^n_{F[\varepsilon]_\nu})$ generated by $m$ points in $\varmathbb{A}^n_{F[\varepsilon]_\nu}$ in generic position, where $\varmathbb{A}^n_{F[\varepsilon]_\nu}$ is the $n$-dimensional affine space over the truncated polynomial ring  $F[\varepsilon]_\nu$. We will not consider here degenerate points and we are assuming that no two points coinciding and no three points are lying on a line. Now for the case $n=2$ and $\nu=2$, any $l_i=\left(\begin{array}{c} a_i\\b_i\end{array}\right)\in \varmathbb{A}^2_{F}\setminus\left\lbrace\left(\begin{array}{c} 0\\0\end{array}\right)\right\rbrace$ and $l_{i,\varepsilon}:=\left(\begin{array}{c} a_{i,\varepsilon}\\b_{i,\varepsilon}\end{array}\right)\in \varmathbb{A}^2_{F}$,                                                                                                                                                                                                                                                                                                                                                             we put $l^*_i=\left(\begin{array} {c}a_i+a_{i,\varepsilon}\varepsilon\\ b_i+b_{i,\varepsilon}\varepsilon\end{array}\right)=\left(\begin{array} {c}a_i\\ b_i\end{array}\right)+\left(\begin{array} {c}a_{i,\varepsilon}\\ b_{i,\varepsilon}\end{array}\right)\varepsilon=l_i+l_{i,\varepsilon}\varepsilon$ and define a differential 
\[d:C_{m+1}(\varmathbb{A}^2_{F[\varepsilon]_2})\rightarrow C_m(\varmathbb{A}^2_{F[\varepsilon]_2})\]
\[d:(l^*_0,\ldots,l^*_m)\mapsto \sum_{i=0}^{m}(-1)^i(l^*_0,\ldots,\hat{l}^*_i,\ldots,l^*_m).\]
Let $\omega\in V^*_2$ be a volume element formed in $V_2:=\varmathbb{A}^2_F$ and $\Delta(l_i,l_j)=\langle \omega,l_i\wedge l_j\rangle$, where $l_i,l_j\in\varmathbb{A}^2_{F}$. Here we define
\[{\Delta(l^*_i,l^*_j)}=\Delta(l^*_i,l^*_j)_{\varepsilon^0}+\Delta(l^*_i,l^*_j)_{\varepsilon^1}\varepsilon\]
where
\[{\Delta(l^*_i,l^*_j)}_{\varepsilon^0}=\Delta(l_i,l_j)\quad\text{ and }\quad\Delta(l^*_i,l^*_j)_{\varepsilon^1}=\Delta(l_i,l_{j,\varepsilon})+\Delta(l_{i,\varepsilon},l_j);\]
more generally for $\nu=n+1$, we have
\[l^*_i=l_i+l_{i,\varepsilon}\varepsilon+l_{i,\varepsilon^2}\varepsilon^2+\cdots+l_{i,\varepsilon^n}\varepsilon^n\quad\text{ and } \quad l_{i,\varepsilon^0}=l_i\]
and we get
\[\Delta(l^*_i,l^*_j)=\Delta(l_i,l_j)+\Delta(l^*_i,l^*_j)_{\varepsilon}\varepsilon+\Delta(l^*_i,l^*_j)_{\varepsilon^2}\varepsilon^2+\cdots+\Delta(l^*_i,l^*_j)_{\varepsilon^n}\varepsilon^n,\]
where
\[\Delta(l^*_i,l^*_j)_{\varepsilon^n}=\Delta(l_i,l_{j,\varepsilon^n})+\Delta(l_{i,\varepsilon},l_{j,\varepsilon^{n-1}})+\cdots+\Delta(l_{i,\varepsilon^n},l_j)\]
Consider the Siegel cross-ratio identity for the $2\times2$ determinants of four vectors in $C_4(\varmathbb{A}^2_F)$ (see \cite{Gonc}, \cite{Sieg} or Remark 2 on p155 of \cite{LeNi})
\begin{equation}\label{2did}
\Delta(l_0,l_1)\Delta(l_2,l_3)=\Delta(l_0,l_2)\Delta(l_1,l_3)-\Delta(l_0,l_3)\Delta(l_1,l_2)
\end{equation}
With the above notation, an analogous to Siegel cross-ratio identity turns out to be true for $\varmathbb{A}^2_{F[\varepsilon]_{n+1}}$, and we can  extract further results which are essential for the proof of our main results. Throughout this section we will assume that $\Delta(l_i,l_j)\neq0$ for $i\neq j$.

\begin{lem}\label{lemid}
For $(l^*_0,l^*_1,l^*_2,l^*_3) \in C_4(\varmathbb{A}^2_{F[\varepsilon]_{n+1}})$, we have 
\begin{equation}\label{genid}
\Delta(l^*_0,l^*_1)\Delta(l^*_2,l^*_3)=\Delta(l^*_0,l^*_2)\Delta(l^*_1,l^*_3)-\Delta(l^*_0,l^*_3)\Delta(l^*_1,l^*_2)
\end{equation}
where \[l^*_i=l_i+l_{i,\varepsilon}\varepsilon+l_{i,\varepsilon^2}\varepsilon^2+\cdots+l_{i,\varepsilon^n}\varepsilon^n\quad\text{ and } \quad l_{i,\varepsilon^0}=l_i\]
\[\Delta(l^*_i,l^*_j)=\Delta(l_i,l_j)+\Delta(l^*_i,l^*_j)_{\varepsilon}\varepsilon+\Delta(l^*_i,l^*_j)_{\varepsilon^2}\varepsilon^2+\cdots+\Delta(l^*_i,l^*_j)_{\varepsilon^n}\varepsilon^n\]
where
\[\Delta(l^*_i,l^*_j)_{\varepsilon^n}=\Delta(l_i,l_{j,\varepsilon^n})+\Delta(l_{i,\varepsilon},l_{j,\varepsilon^{n-1}})+\cdots+\Delta(l_{i,\varepsilon^n},l_j)\]
\end{lem}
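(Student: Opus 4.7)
The key observation is that Siegel's identity \eqref{2did} is a \emph{universal} polynomial identity in the eight coordinates of four column vectors in a rank-$2$ free module --- equivalently, the Pl\"ucker relation $p_{01}p_{23}-p_{02}p_{13}+p_{03}p_{12}=0$ for $2\times 2$ minors. Since $F[\varepsilon]_{n+1}$ is a commutative ring and each $l^*_i$ is a column with entries in that ring, $\Delta(l^*_i,l^*_j)=\langle\omega,l^*_i\wedge l^*_j\rangle$ is nothing but the usual $2\times 2$ determinant computed over $F[\varepsilon]_{n+1}$. Thus \eqref{genid} is literally \eqref{2did} applied over the enlarged base ring, and the elementary derivation of \eqref{2did} (a one-line expansion of both sides in the entries) uses only the distributivity and commutativity axioms, so it carries over verbatim. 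This is the shortest route I would take.

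For a more hands-on argument, I would first verify by direct bilinear expansion that the decomposition $\Delta(l^*_i,l^*_j)_{\varepsilon^k}=\sum_{s+t=k}\Delta(l_{i,\varepsilon^s},l_{j,\varepsilon^t})$ stated in the lemma is correct: this is just bilinearity of $\det$ on the columns together with $\varepsilon^s\cdot\varepsilon^t=\varepsilon^{s+t}$, truncated modulo $\varepsilon^{n+1}$. Then I would expand each side of \eqref{genid} and collect coefficients of $\varepsilon^k$ for $0\le k\le n$. After one round of expansion the coefficient of $\varepsilon^k$ on the left becomes a quadruple sum $\sum_{p_0+p_1+p_2+p_3=k}\Delta(l_{0,\varepsilon^{p_0}},l_{1,\varepsilon^{p_1}})\,\Delta(l_{2,\varepsilon^{p_2}},l_{3,\varepsilon^{p_3}})$, and the identical reorganisation converts the right-hand side into the same quadruple sum but with each single product replaced by the two-term combination $\Delta(l_{0,\varepsilon^{p_0}},l_{2,\varepsilon^{p_2}})\Delta(l_{1,\varepsilon^{p_1}},l_{3,\varepsilon^{p_3}})-\Delta(l_{0,\varepsilon^{p_0}},l_{3,\varepsilon^{p_3}})\Delta(l_{1,\varepsilon^{p_1}},l_{2,\varepsilon^{p_2}})$. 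For each fixed quadruple $(p_0,p_1,p_2,p_3)$ the equality of the two inner expressions is precisely \eqref{2did} applied to the $F$-vectors $(l_{0,\varepsilon^{p_0}},l_{1,\varepsilon^{p_1}},l_{2,\varepsilon^{p_2}},l_{3,\varepsilon^{p_3}})$, which holds unconditionally. Summing over quadruples yields \eqref{genid}.

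The only real obstacle is notational bookkeeping: keeping the conventions of $\Delta(l^*_i,l^*_j)_{\varepsilon^k}$ straight when expanding the multi-index sums, and truncating modulo $\varepsilon^{n+1}$ at the correct point. There is no analytic content beyond \eqref{2did} itself; the genericity hypothesis $\Delta(l_i,l_j)\neq 0$ plays no role in establishing the identity, but will be needed immediately afterwards so that $\Delta(l^*_i,l^*_j)$ is a unit in $F[\varepsilon]_{n+1}$ and the cross-ratios built from these determinants are well defined.
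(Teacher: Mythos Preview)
Your proposal is correct. Your second, ``hands-on'' argument is essentially the paper's own proof: the paper expands $\Delta(l^*,m^*)$ as $\sum_{r}\bigl(\sum_{k=0}^r\Delta(l_k,m_{r-k})\bigr)\varepsilon^r$, multiplies two such expansions to obtain the same quadruple sum you write down, and then invokes the classical identity \eqref{2did} termwise before truncating from $F\llbracket\varepsilon\rrbracket$ to $F[\varepsilon]_{n+1}$.

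Your first route is a genuine (if modest) shortcut that the paper does not take. Observing that \eqref{2did} is the Pl\"ucker relation, hence a polynomial identity in the entries valid over any commutative ring, lets you conclude \eqref{genid} over $F[\varepsilon]_{n+1}$ in one line without ever expanding in powers of $\varepsilon$. This buys conceptual clarity and avoids the bookkeeping you rightly flag as the only obstacle; the paper's explicit expansion, on the other hand, has the side benefit of simultaneously verifying the formula for $\Delta(l^*_i,l^*_j)_{\varepsilon^k}$ and making the coefficient-by-coefficient identity \eqref{newid} immediately visible, which is what is actually used downstream. Your closing remark that the nonvanishing hypothesis $\Delta(l_i,l_j)\neq 0$ is irrelevant to the identity itself but needed for invertibility in the subsequent cross-ratio computations is also correct.
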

\begin{proof}
For $r=0,\ldots,n$,  we can write $l^*=\left(\begin{array}{c}
                                             \sum_{r\geq0}l_r\varepsilon^r\\ \sum_{r\geq0}l'_r\varepsilon^r
                                            \end{array}\right)$ and $m^*=\left(\begin{array}{c}
                                             \sum_{r\geq0}m_r\varepsilon^r\\ \sum_{r\geq0}m'_r\varepsilon^r
                                            \end{array}\right).$
                                            
Now we have
\begin{align*}
\Delta(l^*,m^*)=&\left|\begin{array}{cc}
                      \sum_{r\geq0}l_r\varepsilon^r& \sum_{r\geq0}m_r\varepsilon^r\\\sum_{r\geq0}l'_r\varepsilon^r&\sum_{r\geq0}m'_r\varepsilon^r
                      \end{array}\right|=\sum_{r\geq0}\left(\sum_{k=0}^r l_km'_{r-k}-\sum_{k=0}^rl'_km_{r-k}\right)\varepsilon^r\\
                      &=\sum_{r\geq0}\left(\sum_{k=0}^r\Delta\left(l_k,m_{r-k}\right)\right)\varepsilon^r
\end{align*}
Hence
\begin{align*}
\Delta(l^*_0,l^*_1)\Delta(l^*_2,l^*_3)=&\sum_{r\geq0}\left(\sum_{k=0}^r\Delta(l_{0,k},l_{1,r-k})\right)\varepsilon^r\cdot \sum_{s\geq0}\left(\sum_{j=0}^r\Delta(l_{0,j},l_{1,r-j})\right)\varepsilon^s\\
=&\sum_{t\geq0}\varepsilon^t\left(\sum_{r=0}^t\left(\sum_{k=0}^r\Delta(l_{0,k},l_{1,r-k})\sum_{j=0}^{t-r}\Delta(l_{2,j},l_{3,t-r-j})\right)\right)\\
=&\sum_{t\geq0}\varepsilon^t\left(\sum_{r=0}^t\left(\sum_{k=0}^r\sum_{j=0}^{t-r}\Delta(l_{0,k},l_{1,r-k})\Delta(l_{2,j},l_{3,t-r-j})\right)\right),
\end{align*}
and similarly for $\Delta(l^*_0,l^*_2)\Delta(l^*_1,l^*_3)$ and $\Delta(l^*_0,l^*_3)\Delta(l^*_1,l^*_2)$. Hence we use validity of (\ref{2did}) to deduce the analogue for $\Delta(l^*_i,l^*_j)$'s in place of $\Delta(l_i,l_j)$ passing from the ring $F\llbracket \varepsilon\rrbracket$ of power series to a truncated polynomial ring, say to $F[\varepsilon]_{n+1}$.
\end{proof}

As special cases we find for $n=0$ the identity \eqref{2did} while for $n=1$ we have the following identity which will be used extensively below:
\begin{align}\label{newid}
&\Delta(l_0,l_1)\Delta(l^*_2,l^*_3)_{\varepsilon}+\Delta(l_2,l_3)\Delta(l^*_0,l^*_1)_{\varepsilon}\notag\\
&=\left\lbrace\Delta(l_0,l_2)\Delta(l^*_1,l^*_3)_{\varepsilon}+\Delta(l_1,l_3)\Delta(l^*_0,l^*_2)_{\varepsilon}\right\rbrace-\left\lbrace\Delta(l_0,l_3)\Delta(l^*_1,l^*_2)_{\varepsilon}+\Delta(l_1,l_2)\Delta(l^*_0,l^*_3)_{\varepsilon}\right\rbrace.
\end{align}
if we write 
\[(ab)_{\varepsilon^n}:=a_{\varepsilon^n}b_{\varepsilon^0}+a_{\varepsilon^{n-1}}b_{\varepsilon}+\cdots+a_{\varepsilon^0}b_{\varepsilon^n}\]
then (\ref{newid}) can be more concisely written as
\[\left\lbrace\Delta(l^*_0,l^*_1)\Delta(l^*_2,l^*_3)\right\rbrace_{\varepsilon}=\left\lbrace\Delta(l^*_0,l^*_2)\Delta(l^*_1,l^*_3)\right\rbrace_{\varepsilon}-\left\lbrace\Delta(l^*_0,l^*_3)\Delta(l^*_1,l^*_2)\right\rbrace_{\varepsilon}.\]
\subsection{Cross-ratio in $F[\varepsilon]_{\nu}$:}\label{cr_Feps}
In this section we will try to find the cross-ratio of four points in $F[\varepsilon]_{\nu}$ for $\nu=n+1$. We will use the same technique here as we did for the identity (\ref{genid}) but the procedure here involves lengthy calculations. First we define the cross-ratio of four points $(l^*_0,\ldots,l^*_3) \in C_4(\varmathbb{A}^2_{F[\varepsilon]_{n+1}})$ as
\[\textbf{r}(l^*_0,\ldots,l^*_3)=\frac{\Delta(l^*_0,l^*_3)\Delta(l^*_1,l^*_2)}{\Delta(l^*_0,l^*_2)\Delta(l^*_1,l^*_3)}\]
If we expand $\textbf{r}(l^*_0,\ldots,l^*_3)$ as a truncated polynomial over $F[\varepsilon]_{n+1}$, then
\begin{align}\label{trunc_poly}
\textbf{r}(l^*_0,\ldots,l^*_3)=\left(r_{\varepsilon^0}+r_{\varepsilon}\varepsilon+r_{\varepsilon^2}\varepsilon^2+\cdots+r_{\varepsilon^n}\varepsilon^n\right)(l^*_0,\ldots,l^*_3)
\end{align}
If we truncate this for $n=0$, then 
\begin{align}\label{reps0}
\textbf{r}(l^*_0,\ldots,l^*_3)=r_{\varepsilon^0}(l^*_0,\ldots,l^*_3)=r(l_0,\ldots,l_3)=\frac{\Delta(l_0,l_3)\Delta(l_1,l_2)}{\Delta(l_0,l_2)\Delta(l_1,l_3)}
\end{align}
If we truncate \eqref{trunc_poly} for $n=1$ then the coefficient of $\varepsilon^0$ will remain the same as for $n=0$ and we compute the coefficient of $\varepsilon$ in the following way:

Consider $(l^*_0,\ldots,l^*_3) \in C_4(\varmathbb{A}^2_{F[\varepsilon]_2})$ in generic position, we get 
\[\textbf{r}(l^*_0,\ldots,l^*_3)=\frac{\Delta(l^*_0,l^*_3)\Delta(l^*_1,l^*_2)}{\Delta(l^*_0,l^*_2)\Delta(l^*_1,l^*_3)}=\frac{\{\Delta(l_0,l_3)+\Delta(l^*_0,l^*_3)_{\varepsilon}\varepsilon\}\{\Delta(l_1,l_2)+\Delta(l^*_1,l^*_2)_\varepsilon\varepsilon\}}{\{\Delta(l_0,l_2)+\Delta(l^*_0,l^*_2)_{\varepsilon}\varepsilon\}\{\Delta(l_1,l_3)+\Delta(l^*_1,l^*_3)_{\varepsilon}\varepsilon\}}\]
If $a\neq0\in F$ then the inverse of $(a+b\varepsilon)\in F[\varepsilon]_2$ is $\frac{1}{a}-\frac{b}{a^2}\varepsilon \in F[\varepsilon]_2$ (this is the same as the inversion relation in $T\mathcal{B}_2(F)$ discussed later in $\S$\ref{tbsc2}).

Simplify the above by multiplying the inverses of denominators and separate the coefficients of $\varepsilon^0$ and $\varepsilon$. The coefficient of $\varepsilon$ is the following 
\begin{equation}\label{reps1}
r_{\varepsilon}(l^*_0,\ldots,l^*_3)=\frac{\{\Delta(l^*_0,l^*_3)\Delta(l^*_1,l^*_2)\}_{\varepsilon}}{\Delta(l_0,l_2)\Delta(l_1,l_3)}-r(l_0,\ldots,l_3)\frac{\{\Delta(l^*_0,l^*_2)\Delta(l^*_1,l^*_3)\}_{\varepsilon}}{\Delta(l_0,l_2)\Delta(l_1,l_3)}
\end{equation}
Now for $n=2$, i.e.
$(l^*_0,\ldots,l^*_3) \in C_4(\varmathbb{A}^2_{F[\varepsilon]_3})$, we will use $(l_i,l_j)$ instead of $\Delta(l_i,l_j)$ to get
\[\textbf{r}(l^*_0,\ldots,l^*_3)=\frac{\{(l_0,l_3)+(l^*_0,l^*_3)_{\varepsilon}\varepsilon+(l^*_0,l^*_3)_{\varepsilon^2}\varepsilon^2\}\{(l_1,l_2)+(l^*_1,l^*_2)_{\varepsilon}\varepsilon+(l^*_1,l^*_2)_{\varepsilon^2}\varepsilon^2\}}{\{(l_0,l_2)+(l^*_0,l^*_2)_{\varepsilon}\varepsilon+(l^*_0,l^*_3)_{\varepsilon^2}\varepsilon^2\}\{(l_1,l_3)+(l^*_1,l^*_3)_{\varepsilon}\varepsilon+(l^*_1,l^*_3)_{\varepsilon^2}\varepsilon^2\}} \]
simplify and separate the coefficient of $\varepsilon^0$,      $\varepsilon^1$ and $\varepsilon^2$. Coefficients of $\varepsilon^0$ and $\varepsilon^1$ are same as we computed in \eqref{reps0} and \eqref{reps1} respectively, and the coefficient of $\varepsilon^2$ is 
\begin{align}\label{reps2}
r_{\varepsilon^2}(l^*_0,\ldots,l^*_3)=&\frac{\{(l^*_0,l^*_3)(l^*_1,l^*_2)\}_{\varepsilon^2}}{(l_0,l_2)(l_1,l_3)}-r_{\varepsilon}(l^*_0,\ldots,l^*_3)\frac{\{(l^*_0,l^*_2)(l^*_1,l^*_3)\}_{\varepsilon}}{(l_0,l_2)(l_1,l_3)}\notag\\
&-r(_0,\ldots,l_3)\frac{\{(l^*_0,l^*_2)(l^*_1,l^*_3)\}_{\varepsilon^2}}{(l_0,l_2)(l_1,l_3)}
\end{align}
\begin{rem}
The computation of coefficient of $\varepsilon^n$ which is $r_{\varepsilon^n}(l^*_0,\ldots,l^*_3)$ in the truncated polynomial \eqref{trunc_poly} will give us the following:
\begin{align}
\sum_{k=0}^n\left(\left\lbrace\Delta(l^*_0,l^*_2)\Delta(l^*_1,l^*_3)\right\rbrace_{\varepsilon^k}r_{\varepsilon^{n-k}}(l^*_0,\ldots,l^*_3)\right)=\left\lbrace\Delta(l^*_0,l^*_3)\Delta(^*_1,l^*_2)\right\rbrace_{\varepsilon^n},\notag
\end{align}
where $\Delta(l_i,l_j)\neq0$ for $i\neq j$ and $(l^*_0,\ldots,l^*_3)\in C_4\left(\varmathbb{A}^2_{F[\varepsilon]_{n+1}}\right)$
\end{rem}

\subsection{Triple-ratio in $F[\varepsilon]_{\nu}$:}\label{gen_cross}
In this subsection we will discuss triple-ratio (generalized cross-ratio) of 6 points, i.e.,  $(l^*_0,\ldots,l^*_5)\in C_6(\varmathbb{A}^3_{F[\varepsilon]_{\nu}})$ for $\nu=n+1$. We are pleased to see that the calculations in triple-ratio are similar as the cross-ratio of 4 points $(l^*_0,\ldots,l^*_3) \in C_4(\varmathbb{A}^2_{F[\varepsilon]_{\nu}})$. 

\textbf{Case} $\nu=2$:

First we take $(l^*_0,\ldots,l^*_5) \in C_6(\varmathbb{A}^3_{F[\varepsilon]_2})$, for any $l^*_i \in (l^*_0,\ldots,l^*_5)$

\[l^*_i=\left(\begin{array}{c}a_i+a_{i,\varepsilon}\varepsilon\\ b_i+b_{i,\varepsilon}\varepsilon\\ c_i+c_{i,\varepsilon}\varepsilon\end{array}\right)
=\left(\begin{array}{c}a_i\\b_i\\c_i\end{array}\right)+\left(\begin{array}{c}a_{i,\varepsilon}\\b_{i,\varepsilon}\\c_{i,\varepsilon}\end{array}                                                                                                                                   \right)\varepsilon=l_i+l_{i,\varepsilon}\varepsilon\]

\[\Delta(l^*_i,l^*_j,l^*_k)=\Delta(l_i,l_j,l_k)+\Delta(l^*_i,l^*_j,l^*_k)_\varepsilon\varepsilon \]
where $\Delta(l_i,l_j,l_k)$ is a $3\times3$-determinant,
\[\Delta(l^*_i,l^*_j,l^*_k)_\varepsilon=\Delta(l_{i,\varepsilon},l_j,l_k)+\Delta(l_i,l_{j,\varepsilon},l_k)+\Delta(l_i,l_j,l_{k,\varepsilon})\]
and\[\Delta(l^*_i,l^*_j,l^*_k)_{\varepsilon^0}=\Delta(l_{i},l_j,l_k)\]
As we can expand 
\[\textbf{r}_3(l^*_0,\ldots,l^*_5)=r_3(l_0,\ldots,l_5)+r_{3,\varepsilon}(l^*_0,\ldots,l^*_5)\varepsilon\]
From \cite{Gonc1} we have
\[r_3(l_0,\ldots,l_5)=\text{Alt}_6\frac{\Delta(l_0,l_1,l_3)\Delta(l_1,l_2,l_4)\Delta(l_2,l_0,l_5)}{\Delta(l_0,l_1,l_4)\Delta(l_1,l_2,l_5)\Delta(l_2,l_0,l_3)}\]
for $\Delta(l_i,l_j,l_j)\neq0$ multiplicative inverse of $\Delta(l^*_i,l^*_j,l^*_k)$ is $-\frac{1}{\Delta(l_i,l_j,l_j)}-\frac{\Delta(l^*_i,l^*_j,l^*_k)_\varepsilon}{\Delta(l_i,l_j,l_j)^2}\varepsilon$ and from now on we will use $(l^*_il^*_jl^*_k)$ instead of $\Delta(l^*_i,l^*_j,l^*_k)$ unless specify.
\begin{align}
\textbf{r}(l^*_0,\ldots,l^*_5)&=\text{Alt}_6\frac{(l^*_0l^*_1l^*_3)(l^*_1l^*_2l^*_4)(l^*_2l^*_0l^*_5)}{(l^*_0l^*_1l^*_4)(l^*_1l^*_2l^*_5)(l^*_2l^*_0l^*_3)}\notag\\
&=\text{Alt}_6\Bigg\{\frac{\{(l_0l_1l_3)+(l^*_0l^*_1l^*_3)_\varepsilon\varepsilon\}\{(l_1l_2l_4)+(l^*_1l^*_2l^*_4)_\varepsilon\varepsilon\}\{(l_2l_0l_5)+(l^*_2l^*_0l^*_5)_\varepsilon\varepsilon\}}{\{(l_0l_1l_4)+(l^*_0l^*_1l^*_4)_\varepsilon\varepsilon\}\{(l_1l_2l_5)+(l^*_1l^*_2l^*_5)_\varepsilon\varepsilon\}\{(l_2l_0l_3)+(l^*_2l^*_0l^*_3)_\varepsilon\varepsilon\}}\Bigg\}\notag
\end{align}
Simplify the above and separate coefficients of $\varepsilon^0$ and $\varepsilon^1$, we will see that the coefficient of $\varepsilon^1$ is the triple-ratio of six points $(l^*_0,\ldots,l^*_5)\in C_6(\varmathbb{A}^3_F)$ and the coefficient of $\varepsilon$ is the following:
\begin{align}\label{greps1}
&r_{3,\varepsilon}(l^*_0,\ldots,l^*_5)\notag\\
&=\text{Alt}_6\Bigg\{\frac{\{(l^*_0l^*_1l^*_3)(l^*_1l^*_2l^*_4)(l^*_2l^*_0l^*_5)\}_\varepsilon}{(l_0l_1l_4)(l_1l_2l_5)(l_2l_0l_3)}-\frac{(l_0l_1l_3)(l_1l_2l_4)(l_2l_0l_5)}{(l_0l_1l_4)(l_1l_2l_5)(l_2l_0l_3)}\frac{\{(l^*_0l^*_1l^*_4)(l^*_1l^*_2l^*_5)(l^*_2l^*_0l^*_3)\}_\varepsilon}{(l_0l_1l_4)(l_1l_2l_5)(l_2l_0l_3)}\Bigg\}
\end{align}
we can write the explicit formula.
For 6 points in $C_6(\varmathbb{A}^3_{F[\varepsilon]_2})$
\[r_{3,\varepsilon}(l^*_0,\ldots,l^*_5)=\text{Alt}_6\Bigg\{\frac{\{(l^*_0l^*_1l^*_3)(l^*_1l^*_2l^*_4)(l^*_2l^*_0l^*_5)\}_\varepsilon}{(l_0l_1l_4)(l_1l_2l_5)(l_2l_0l_3)}-r_3(l_0,\ldots,l_5)\frac{\{(l^*_0l^*_1l^*_4)(l^*_1l^*_2l^*_5)(l^*_2l^*_0l^*_3)\}_\varepsilon}{(l_0l_1l_4)(l_1l_2l_5)(l_2l_0l_3)}\Bigg\}\]
\textbf{Case} $\nu=3$:

For 6 points in $C_6(\varmathbb{A}^3_{F[\varepsilon]_3})$ we will compute the following through same procedure as we did above, and for $a\neq0$ we have $\frac{1}{a+a_\varepsilon\varepsilon+a_{\varepsilon^2}\varepsilon^2}=b+b_\varepsilon\varepsilon+b_{\varepsilon^2}\varepsilon^2$ where $b=a^{-1}\in F^\times$, $b_\varepsilon=(a,a_\varepsilon)$ and $b_{\varepsilon^2}=(a,a_\varepsilon,a_{\varepsilon^2})$
\begin{align}
r_{3,{\varepsilon^2}}=\text{Alt}_6&\Bigg\{\frac{\{(l^*_0l^*_1l^*_3)(l^*_1l^*_2l^*_4)(l^*_2l^*_0l^*_5)\}_{\varepsilon^2}}{(l_0l_1l_4)(l_1l_2l_5)(l_2l_0l_3)}-r_{3,\varepsilon}(l^*_0,\ldots,l^*_5)\frac{\{(l^*_0l^*_1l^*_4)(l^*_1l^*_2l^*_5)(l^*_2l^*_0l^*_3)\}_\varepsilon}{(l_0l_1l_4)(l_1l_2l_5)(l_2l_0l_3)}\notag\\
-r_3&(l_0,\ldots,l_5)\frac{\{(l^*_1l^*_2l^*_4)(l^*_1l^*_2l^*_5)(l^*_2l^*_0l^*_3)\}_{\varepsilon^2}}{(l_0l_1l_4)(l_1l_2l_5)(l_2l_0l_3)}\Bigg\}\notag
\end{align}
%and for 6 points in $C_6(\varmathbb{A}^3_{F[\varepsilon]_{n+1}})$
%\begin{align}
%r_{3,{\varepsilon^n}}(l^*_0,\ldots,l^*_5)=\text{ Alt }&\Bigg\{\frac{\{(l^*_0l^*_1l^*_3)(l^*_1l^*_2l^*_4)(l^*_2l^*_0l^*_5)\}_{\varepsilon^n}}{(l_0l_1l_4)(l_1l_2l_5)(l_2l_0l_3)}-r_{3,{\varepsilon^{n-1}}}(l^*_0,\ldots,l^*_5)\frac{\{(l^*_0l^*_1l^*_4)(l^*_1l^*_2l^*_5)(l^*_2l^*_0l^*_3)\}_\varepsilon}{(l_0l_1l_4)(l_1l_2l_5)(l_2l_0l_3)}-\cdots\notag\\
%-r_3&(l_0,\ldots,l_5)\frac{\{(l^*_1l^*_2l^*_4)(l^*_1l^*_2l^*_5)(l^*_2l^*_0l^*_3)\}_{\varepsilon^n}}{(l_0l_1l_4)(l_1l_2l_5)(l_2l_0l_3)}\Bigg\}\notag
%\end{align}
Here we used the following notation just for simplification.
\[(abc)_{\varepsilon}:=a_{\varepsilon}b_{\varepsilon^0}c_{\varepsilon^0}+a_{\varepsilon^0}b_{\varepsilon}c_{\varepsilon^0}+a_{\varepsilon^0}b_{\varepsilon^0}c_{\varepsilon}\]
\section{The Tangent Complex to the Bloch-Suslin Complex}\label{tbsc2}
In this section mainly we will discuss text from \cite{Cath3}. Let $F[\varepsilon]_2=F[\varepsilon]/\varepsilon^2$ be the ring of dual numbers for an arbitrary field $F$. We can define an $F^\times$-action in $F[\varepsilon]_2$ as follows. For $\lambda \in F^\times$,
\[\lambda:F[\varepsilon]_2\rightarrow F[\varepsilon]_2, a+a'\varepsilon\mapsto a+\lambda a'\varepsilon\]
we denote this action by $\star$, so we use $\lambda\star(a+a'\varepsilon)=a+\lambda a'\varepsilon$.
\subsubsection{Definition:}The \emph{tangent group} $T\mathcal{B}_2(F)$ is defined as a $\mathbb{Z}$-module generated by the combinations $[a+a'\varepsilon]-[a]\in\varmathbb{Z}[F[\varepsilon]_2],\quad (a,a'\in F)$: for which we put the shorthand $\langle a;a']:=[a+a'\varepsilon]-[a]$ and quotient by the following relation 
%\[\left\langle a;a'\right],\quad a,a'\in F,\quad a\neq0,1\]
%where $\left\langle a,a'\right]=\left[a+a'\varepsilon\right]-[a]\in \varmathbb{Z}\left[F[\varepsilon]_2\right]$, subject to \emph{five-term relation}
\begin{align}\label{t5term}
\left\langle a;a'\right]-&\left\langle b;b'\right]+\left\langle\frac{b}{a};\left(\frac{b}{a}\right)'\right]-\left\langle\frac{1-b}{1-a};\left(\frac{1-b}{1-a}\right)'\right]\notag\\
+&\left\langle\frac{a(1-b)}{b(1-a)};\left(\frac{a(1-b)}{b(1-a)}\right)'\right],\quad a,b\neq 0,1,a\neq b
\end{align}

where \[\left(\frac{b}{a}\right)'=\frac{ab'-a' b}{a^2},\]
\[\left(\frac{1-b}{1-a}\right)'=\frac{(1-b)a'-(1-a)b'}{(1-a)^2}\]
and
\[\left(\frac{a(1-b)}{b(1-a)}\right)'=\frac{b(1-b)a'-a(1-a)b'}{(b(1-a))^2}\]
\begin{rem}
See \cite{Cath3} for a discussion of $T\mathcal{B}_2(F)$, where the definition of $T\mathcal{B}_2(F)$ was justified using Lemma 3.1 of \cite{Cath3}
\end{rem}

We give a list of relations in $T\mathcal{B}_2(F)$ from \cite{Cath3}. These relations use the $\star$-action in $T\mathcal{B}_2(F)$. By specialization of the five-term relation (\ref{t5term}), we find

1. Two-term relation:
\[\langle a;b]_2=-\langle 1-a;-b]_2\]
2. Inversion relation:
\[\langle a;b]_2=\left\langle \frac{1}{a};-\frac{b}{a^2}\right]_2\]
3. Four-term relation:

If we use $a'=a(1-a)$ and $b'=b(1-b)$ then (\ref{t5term}) becomes four-term relation (see \cite{Cath3}).
\begin{align}
\langle a;a(1-a)]_2-&\langle b;b(1-b)]_2+a\star\left\langle \frac{b}{a};\frac{b}{a}\left(1-\frac{b}{a}\right)\right]_2\notag\\
+&(1-a)\star\left\langle\frac{1-b}{1-a}; \frac{1-b}{1-a}\left(1-\frac{1-b}{1-a}\right)\right]_2=0,\notag
\end{align}
where $a,b\neq0,1, a\neq b$.

The following map is an infinitesimal analogue of $\delta$ (defined in \cite{Gonc1}) and $\partial$ (defined in \cite{Cath3} and \cite{SiDs}), Cathelineau called it \emph{tangential map}.
\[T\mathcal{B}_2(F)\xrightarrow{\partial_\varepsilon}\left(F\otimes F^\times\right) \oplus \left(\bigwedge{}^2F\right)\]
with
\[\partial_\varepsilon\left(\langle a;b]_2\right)=\left(\frac{b}{a}\otimes(1-a)+\frac{b}{1-a}\otimes a\right)+\left(\frac{b}{1-a}\wedge \frac{b}{a}\right)\]
The first term of the complex is in degree one and $\partial_\varepsilon$ has degree +1.

Note that we get the direct sum of two spaces on the right side.

\section{Dilogarithmic Bicomplexes}\label{tbsdb}
In this section we will connect the Grassmannian bicomplex to the Cathelineau's tangential complex in weight 2.

We will use the following notations throughout this section
\[\Delta(l^*_i,l^*_j)_{\varepsilon}=\Delta(l_{i,\varepsilon},l_j)+\Delta(l_i,l_{j,\varepsilon})\quad\quad\quad
\text{and}\quad\quad\quad 
\Delta(l^*_i,l^*_j)_{\varepsilon^0}=\Delta(l_i,l_j)\]
and we will assume that $\Delta(l_i,l_j)\neq0$ (as we often want to divide by such determinants).

Let $C_m(\varmathbb{A}^2_{F[\varepsilon]_2})$ be the free abelian group generated by the configuration of $m$ points in $\varmathbb{A}^2_{F[\varepsilon]_2}$, where $\varmathbb{A}^2_{F[\varepsilon]_2}$ is defined as an affine plane over $F[\varepsilon]_2$. Configurations of $m$ points in $\varmathbb{A}^2_{F[\varepsilon]_2}$ are 2-tuples of vectors over $F[\varepsilon]_2$ modulo $GL_2(F[\varepsilon]_2)$. In this case the Grassmannian complex will be in the following shape
\begin{equation}
\cdots\xrightarrow{d}C_5(\varmathbb{A}^2_{F[\varepsilon]_2})\xrightarrow{d}C_4(\varmathbb{A}^2_{F[\varepsilon]_2})\xrightarrow{d}C_3(\varmathbb{A}^2_{F[\varepsilon]_2}) \notag
\end{equation}
\begin{equation}
d\colon(l^*_0,\ldots,l^*_{m-1})\mapsto\sum_{i=0}^m (-1)^i(l^*_0,\ldots,\hat{l^*_i},\ldots,l^*_{m-1})\notag
\end{equation}
where $l_i^*=\left(\begin{array}{c}a_i+a_{i,\varepsilon}\varepsilon\\b_i+b_{i,\varepsilon}\varepsilon\end{array}\right)=\left(\begin{array}{c}a_i\\ b_i\end{array}\right)+\left(\begin{array}{c}a_{i,\varepsilon}\\b_{i,\varepsilon}\end{array}\right)\varepsilon=l_i+l_{i,\varepsilon}\varepsilon$ and $a_i,b_i,a_{i,\varepsilon},b_{i,\varepsilon}\in F$, $\left(\begin{array}{c}a_i\\ b_i\end{array}\right)\neq \left(\begin{array}{c}0\\ 0\end{array}\right)$

%Here we recall the $\varmathbb{Z}$-module $T\mathcal{B}_2(F)$ generated by $\left\langle a;b\right]_2:=[a+b\varepsilon]-[a]\in \varmathbb{Z}\left[F[\varepsilon]_2\right]$ (have discussed earlier in $\S$\ref{tbsc2}).

Consider the following diagram
\begin{displaymath}\label{tbicomp2}
 \xymatrix{
C_5(\varmathbb{A}^2_{F[\varepsilon]_2})\ar[r]^{d} &C_4(\varmathbb{A}^2_{F[\varepsilon]_2})\ar[r]^{d}\ar[d]^{\tau_{1,\varepsilon}^2} &C_3(\varmathbb{A}^2_{F[\varepsilon]_2})\ar[d]^{\tau_{0,\varepsilon}^2}\\	
& T\mathcal{B}_2(F)\ar[r]^{\partial_\varepsilon\qquad}	 & F\otimes F^{\times}\oplus \bigwedge^2 F
}\tag{4.2a}
\end{displaymath}
where
\[\partial_\varepsilon:\langle a;b]_2\mapsto\left(\frac{b}{a}\otimes(1-a)+\frac{b}{1-a}\otimes a\right)+\left(\frac{b}{1-a}\wedge\frac{b}{a} \right)\]
We write the map $\tau^2_{0,\varepsilon}$ as a sum of two maps
\[\tau^{(1)}:C_3(\varmathbb{A}^2_{F[\varepsilon]_2})\rightarrow F\otimes F^\times\]
and
\[\tau^{(2)}:C_3(\varmathbb{A}^2_{F[\varepsilon]_2})\rightarrow \bigwedge{}^2F\]
where
\begin{align}\label{t*def1}
&\tau^{(1)}(l^*_0,l^*_1,l^*_2)\notag\\
=&\frac{\Delta(l^*_1,l^*_2)_\varepsilon}{\Delta(l_1,l_2)}\otimes\frac{\Delta(l_0,l_2)}{\Delta(l_0,l_1)}-\frac{\Delta(l^*_0,l^*_2)_\varepsilon}{\Delta(l_0,l_2)}\otimes\frac{\Delta(l_1,l_2)}{\Delta(l_1,l_0)}+\frac{\Delta(l^*_0,l^*_1)_\varepsilon}{\Delta(l_0,l_1)}\otimes\frac{\Delta(l_2,l_1)}{\Delta(l_2,l_0)}\notag
\end{align}
and
\begin{align}
&\tau^{(2)}(l^*_0,l^*_1,l^*_2)\notag\\
=&\frac{\Delta(l^*_0,l^*_1)_\varepsilon}{\Delta(l_0,l_1)}\wedge \frac{\Delta(l^*_1,l^*_2)_\varepsilon}{\Delta(l_1,l_2)}-\frac{\Delta(l^*_0,l^*_1)_\varepsilon}{\Delta(l_0,l_1)}\wedge \frac{\Delta(l^*_0,l^*_2)_\varepsilon}{\Delta(l_0,l_2)}+\frac{\Delta(l^*_1,l^*_2)_\varepsilon}{\Delta(l_1,l_2)}\wedge \frac{\Delta(l^*_0,l^*_2)_\varepsilon}{\Delta(l_0,l_2)}\notag
\end{align}
Furthermore, we put
\[\tau_{1,\varepsilon}^2(l^*_0,\ldots,l^*_3)=\left\langle r(l_0,\ldots,l_3);r_\varepsilon(l^*_0,\ldots,l^*_3)\right]\]
where $r(l_0,\ldots,l_3)$ and $r_\varepsilon(l^*_0,\ldots,l^*_3)$ are the coefficients of $\varepsilon^0$ and $\varepsilon^1$ respectively, in  $\textbf{r}(l^*_0,\ldots,l^*_3)$ as defined in \ref{cr_Feps}.
%=r(l_0,\ldots,l_3)+r_\varepsilon(l^*_0,\ldots,l^*_3)\varepsilon$ 
and $\Delta$ is defined in $\S$\ref{con_w_cr}
%\[\Delta(l^*_i,l^*_j)=\Delta(l_i,l_j)+\{\Delta(l_i,l_{j,\varepsilon})+\Delta(l_{i,\varepsilon},l_j)\}\varepsilon\]

Our maps $\tau_{0,\varepsilon}^2$ and $\tau_{1,\varepsilon}^2$ are based on ratios of determinants and cross-ratios respectively, so there is enough evidence that these are independent of the length of the vectors and the volume formed by these vectors. This independence can be seen directly through the definition of the maps.

We will also use the shorthand $(l_il_j)$ instead of $\Delta(l_i,l_j)$ wherever we find less space to accommodate long expressions.

Now calculate %the cross-ratio of the points in $\varmathbb{A}^2_F[\varepsilon]_2$.
%\begin{align}
%\textbf{r}(l^*_0,&\ldots,l^*_3)=\frac{\Delta(l^*_0,l^*_3)\Delta(l^*_1,l^*_2)}{\Delta(l^*_0,l^*_2)\Delta(l^*_1,l^*_3)}\quad\text{we have already assumed that }\Delta(l_i,l_j)\neq0\notag \\
%&=\frac{\left(\Delta(l_0,l_3)+\{\Delta(l_0,l_{3,\varepsilon})+\Delta(l_{0,\varepsilon},l_3)\}\varepsilon\right)\left(\Delta(l_1,l_2)+\{\Delta(l_1,l_{2,\varepsilon})+\Delta(l_{1,\varepsilon},l_2)\}\varepsilon\right)}{\left(\Delta(l_0,l_2)+\{\Delta(l_0,l_{2,\varepsilon})+\Delta(l_{0,\varepsilon},l_2)\}\varepsilon\right)\left(\Delta(l_1,l_3)+\{\Delta(l_1,l_{3,\varepsilon})+\Delta(l_{1,\varepsilon},l_3)\}\varepsilon\right)}\notag \\
%&=\frac{(l_0l_3)(l_1l_2)+\{(l_0l_3)\left((l_1l_{2,\varepsilon})+(l_{1,\varepsilon}l_2)\right)+(l_1l_2)\left((l_0l_{3,\varepsilon})+(l_{0,\varepsilon}l_3)\right)\}\varepsilon}{(l_0l_2)(l_1l_3)+\{(l_0l_2)\left((l_1l_{3,\varepsilon})+(l_{1,\varepsilon}l_3)\right)+(l_1l_3)\left((l_0l_{2,\varepsilon})+(l_{0,\varepsilon}l_2)\right)\}\varepsilon}\notag\\
%&=\frac{(l_0l_3)(l_1l_2)}{(l_0l_2)(l_1l_3)}+\frac{x}{(l_0l_2)^2(l_1l_3)^2}\varepsilon\notag 
%\end{align}
%where
%\begin{align}
%x=&+(l_0l_2)(l_1l_3)(l_0l_3)(l_1l_{2,\varepsilon})+(l_0l_2)(l_1l_3)(l_0l_3)(l_{1,\varepsilon}l_2)\notag\\
%&+(l_0l_2)(l_1l_3)(l_1l_2)(l_0l_{3,\varepsilon})+(l_0l_2)(l_1l_3)(l_1l_2)(l_{0,\varepsilon}l_3)\notag\\
%&-(l_0l_3)(l_1l_2)(l_0l_2)(l_1l_{3,\varepsilon})-(l_0l_3)(l_1l_2)(l_0l_2)(l_{1,\varepsilon}l_3)\notag\\
%&-(l_0l_3)(l_1l_2)(l_1l_3)(l_0l_{2,\varepsilon})-(l_0l_3)(l_1l_2)(l_1l_3)(l_{0,\varepsilon}l_2)\notag
%\end{align}
%Similarly we calculate
\begin{align}\label{1mr}
1-\textbf{r}(l^*_0,\ldots,l^*_3)&=\frac{\Delta(l^*_0,l^*_1)\Delta(l^*_2,l^*_3)}{\Delta(l^*_0,l^*_2)\Delta(l^*_1,l^*_3)}\notag\\
                       &=\frac{(l_0l_1)(l_2l_3)}{(l_0l_2)(l_1l_3)}+\frac{y}{(l_0l_2)^2(l_1l_3)^2}\varepsilon 
\end{align}
where
\begin{align}
y=&+(l_0l_2)(l_1l_3)(l_0l_1)(l_2l_{3,\varepsilon})+(l_0l_2)(l_1l_3)(l_0l_1)(l_{2,\varepsilon}l_3)\notag\\
&+(l_0l_2)(l_1l_3)(l_2l_3)(l_0l_{1,\varepsilon})+(l_0l_2)(l_1l_3)(l_2l_3)(l_{0,\varepsilon}l_1)\notag\\
&-(l_0l_1)(l_2l_3)(l_0l_2)(l_1l_{3,\varepsilon})-(l_0l_1)(l_2l_3)(l_0l_2)(l_{1,\varepsilon}l_3)\notag\\
&-(l_0l_1)(l_2l_3)(l_1l_3)(l_0l_{2,\varepsilon})-(l_0l_1)(l_2l_3)(l_1l_3)(l_{0,\varepsilon}l_2)\notag
\end{align}
\begin{rem}
The $F^\times$-action of $T\mathcal{B}_2(F)$ lifts to an $F^\times$-action on  $C_4(\varmathbb{A}^2_{F[\varepsilon]_2})$ in the obvious way: 
\end{rem}
The $F^\times$-action is defined above for $F[\varepsilon]_2$ induces an  $F^\times$-action in
$\varmathbb{A}^2_{F[\varepsilon]_2}$ diagonally as 
\[\lambda\star\left(\begin{array}{c}
                    a+a_{\varepsilon}\varepsilon\\b+b_{\varepsilon}\varepsilon\end{array}
                                                                             \right)=\left(\begin{array}{c}
                    a+\lambda a_{\varepsilon}\varepsilon\\b+\lambda b_{\varepsilon}\varepsilon\end{array}
                                                                             \right)\in \varmathbb{A}^2_{F[\varepsilon]_2},\lambda\in F^\times                  
\]
\begin{lem}\label{claim2}
The diagram (\ref{tbicomp2}) is commutative
\end{lem}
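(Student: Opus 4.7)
The plan is to verify, on a generic generator $(l^*_0,l^*_1,l^*_2,l^*_3) \in C_4(\varmathbb{A}^2_{F[\varepsilon]_2})$, the identity
\[
\partial_\varepsilon \circ \tau^2_{1,\varepsilon} = \tau^2_{0,\varepsilon} \circ d
\]
by expanding both composites and comparing them in the two summands $F \otimes F^\times$ and $\bigwedge^2 F$ separately. The key preliminary step is to reduce the quotients inside $\partial_\varepsilon(\langle r;r_\varepsilon])$ to the atomic pieces $\alpha_{ij} := \Delta(l^*_i,l^*_j)_\varepsilon/\Delta(l_i,l_j)$ that appear in $\tau^{(1)}$ and $\tau^{(2)}$. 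Siegel's identity \eqref{2did} gives $r = (l_0l_3)(l_1l_2)/[(l_0l_2)(l_1l_3)]$ and $1-r = (l_0l_1)(l_2l_3)/[(l_0l_2)(l_1l_3)]$; dividing \eqref{reps1} by $r$ immediately yields
\[
\frac{r_\varepsilon}{r} = \alpha_{03} + \alpha_{12} - \alpha_{02} - \alpha_{13},
\]
and applying the infinitesimal Siegel identity \eqref{newid} to rewrite $\{\Delta(l_0,l_3)\Delta(l_1,l_2)\}_\varepsilon$ in terms of the other two pairings before dividing by $1-r$ yields
\[
\frac{r_\varepsilon}{1-r} = \alpha_{02} + \alpha_{13} - \alpha_{01} - \alpha_{23}.
\]

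For the $\bigwedge^2 F$ component, the LHS equals $(r_\varepsilon/(1-r)) \wedge (r_\varepsilon/r)$. Expanding this bilinearly and applying antisymmetry, the two occurrences of $\alpha_{02}\wedge\alpha_{13}$ cancel, leaving a twelve-term expression in wedges $\alpha_{ij}\wedge\alpha_{kl}$. On the RHS, the alternating sum $\sum_{i=0}^{3}(-1)^i\tau^{(2)}(l^*_0,\ldots,\widehat{l^*_i},\ldots,l^*_3)$ produces twelve wedges indexed by the pairs of distinct index pairs sharing exactly one index, and a direct term-by-term match confirms the equality. For the $F \otimes F^\times$ component, expanding $(r_\varepsilon/r)\otimes(1-r) + (r_\varepsilon/(1-r))\otimes r$ via bilinearity turns the LHS into a $\mathbb{Z}$-linear combination of monomials $\alpha_{ij}\otimes\Delta(l_k,l_m)$; the parallel expansion of $\sum_{i=0}^{3}(-1)^i\tau^{(1)}(l^*_0,\ldots,\widehat{l^*_i},\ldots,l^*_3)$ produces terms of the same shape, and the equality is then checked coefficient by coefficient.

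One subtlety deserves attention: the literal formula for $\tau^{(1)}$ contains determinants such as $\Delta(l_1,l_0)=-\Delta(l_0,l_1)$ in its $F^\times$-arguments, which naively would contribute extra terms of the form $\alpha_{ij}\otimes(-1)$. These contributions vanish identically in $F \otimes_\mathbb{Z} F^\times$: since $(-1)$ is $2$-torsion in $F^\times$ while $F$ is a $\mathbb{Q}$-vector space in characteristic $0$, for any $\alpha \in F$ one has $\alpha\otimes(-1) = (\alpha/2)\otimes(-1)^2 = (\alpha/2)\otimes 1 = 0$. Hence the sign ambiguities play no role in the final comparison.

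The principal obstacle is the combinatorial bookkeeping in the $F \otimes F^\times$ component, where both sides expand into roughly two dozen monomials $\alpha_{ij}\otimes\Delta(l_k,l_m)$ that must be matched by hand. The $\bigwedge^2 F$ component, by contrast, falls out cleanly once the closed-form expressions for $r_\varepsilon/r$ and $r_\varepsilon/(1-r)$ from \eqref{newid} are in place, since both sides collapse to the same explicit alternating sum of twelve wedges.
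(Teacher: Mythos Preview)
Your proposal is correct and follows essentially the same approach as the paper, which simply states ``This requires direct calculation'' without details. Your explicit reduction of $r_\varepsilon/r$ and $r_\varepsilon/(1-r)$ to linear combinations of the $\alpha_{ij}$ via \eqref{reps1} and \eqref{newid}, followed by term-by-term matching in each summand, is exactly the computation the paper leaves to the reader; your handling of the sign issue via the $2$-torsion argument in $F\otimes_{\mathbb{Z}}F^\times$ is also appropriate in characteristic~$0$.
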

\begin{proof}
This requires direct calculation.
\end{proof}

In the remainder of this section we prove that the following diagram following is a bicomplex.
\begin{displaymath}\label{tbicomp2i}
 \xymatrix{
C_5(\varmathbb{A}^3_{F[\varepsilon]_2})\ar[r]^{d}\ar[d]^{d'}&C_4(\varmathbb{A}^3_{F[\varepsilon]_2})\ar[d]^{d'}\\
C_4(\varmathbb{A}^2_{F[\varepsilon]_2})\ar[r]^{d}\ar[d]^{\tau_{1,\varepsilon}^2} &C_3(\varmathbb{A}^2_{F[\varepsilon]_2})\ar[d]^{\tau_{0,\varepsilon}^2}\\	
T\mathcal{B}_2(F)\ar[r]^{\partial_\varepsilon\qquad}	 & F\otimes F^{\times}\oplus \bigwedge{}^2 F
}\tag{4.2b}
\end{displaymath}
To prove the above is a bicomplex, we are giving the following results.
\begin{prop}\label{zero2}
The map $C_4(\varmathbb{A}^3_{F[\varepsilon]_2})\xrightarrow{d'}C_3(\varmathbb{A}^2_{F[\varepsilon]_2})\xrightarrow{\tau^2_{0,\varepsilon}}\left(F\otimes F^\times\right) \oplus \left(\bigwedge{}^2F\right)$ is zero.
\end{prop}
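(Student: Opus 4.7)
The plan is to expand $\tau^{2}_{0,\varepsilon}\circ d'$ explicitly, convert the $2\times 2$ determinants of projected vectors into $3\times 3$ determinants in $\varmathbb{A}^{3}_{F[\varepsilon]_{2}}$, and then verify that the resulting alternating sum over projection centres vanishes in each summand of $F\otimes F^{\times}\oplus\bigwedge^{2}F$ separately. Writing
\[
d'(l^*_0,l^*_1,l^*_2,l^*_3)=\sum_{i=0}^{3}(-1)^{i}\bigl(l^*_0|l^*_i,\ldots,\widehat{l^*_i},\ldots,l^*_3|l^*_i\bigr),
\]
with $l^*_j|l^*_i$ the projection of $l^*_j$ from $l^*_i$ onto a fixed affine plane, I have to show that $\sum_{i}(-1)^{i}\tau^{(1)}(\cdots)=0$ in $F\otimes F^{\times}$ and $\sum_{i}(-1)^{i}\tau^{(2)}(\cdots)=0$ in $\bigwedge^{2}F$, where the three projected vectors fill the slots of $\tau^{(1)},\tau^{(2)}$.

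The bridge from $\varmathbb{A}^{2}$ back to $\varmathbb{A}^{3}$ is the projection identity
\[
\Delta\bigl(l^*_j|l^*_i,\,l^*_k|l^*_i\bigr)=\mu(l^*_i)\,\Delta(l^*_i,l^*_j,l^*_k),
\]
where $\mu(l^*_i)\in (F[\varepsilon]_{2})^{\times}$ depends only on the projection centre $l^*_i$ and the chosen hyperplane. Because the right-hand slot of $\tau^{(1)}$ and both slots of $\tau^{(2)}$ consist of quotients of projected $2\times 2$ determinants, the scalar $\mu(l^*_i)$ cancels and these slots become ratios of $3\times 3$ determinants in $\varmathbb{A}^{3}$. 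In a logarithmic-derivative slot the substitution produces an extra term depending only on $i$, namely $\mu(l^*_i)_{\varepsilon}/\mu(l_i)$; in the alternating sum this $i$-only correction tensors or wedges with a cofactor in the remaining indices that is a boundary of the form $d(\cdots)$, and so contributes zero.

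After this reduction, $\tau^{(1)}\circ d'$ is a signed sum of twelve tensor terms of the shape
\[
\frac{\Delta(l^*_a,l^*_b,l^*_c)_{\varepsilon}}{\Delta(l_a,l_b,l_c)}\otimes\frac{\Delta(l_p,l_q,l_r)}{\Delta(l_x,l_y,l_z)},
\]
and $\tau^{(2)}\circ d'$ is an analogous signed sum of twelve wedges. I would then group the $\tau^{(1)}$ terms by the left-slot datum and check that each right-slot cofactor reduces to $1\in F^{\times}$ using the antisymmetry of the $3\times 3$ determinant together with the alternating signs $(-1)^{i}$. The $\tau^{(2)}$ computation runs in parallel, with additional simplification from $v\wedge v=0$ eliminating several terms outright.

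The main obstacle is the combinatorial bookkeeping of the $24$ signed tensor and wedge terms plus the $\mu$-corrections: the pairings that produce cancellations are not local to a single projection centre and require comparing contributions coming from two different values of $i$. I expect them to become transparent once the terms are organised by the left-slot datum, since each right-slot cofactor is then a short alternating product of $3\times 3$ determinant ratios whose multiplicative (respectively additive) value is forced to be trivial by the antisymmetry in the three arguments of the $3\times 3$ determinant.
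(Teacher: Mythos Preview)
Your approach is essentially the paper's: convert the projected $2\times2$ determinants to $3\times3$ determinants and then check by direct expansion that the alternating sum over the four projection centres cancels in each summand. The paper, however, sidesteps your scalar $\mu(l^*_i)$ entirely by declaring $\Delta(l_i,\cdot,\cdot)$ itself to be the volume form on $V_3/\langle l_i\rangle$, so that $\Delta(l^*_j|l^*_i,\,l^*_k|l^*_i)=\Delta(l^*_i,l^*_j,l^*_k)$ on the nose; the whole $\mu$-bookkeeping then disappears and one is left with exactly the twelve-plus-twelve term expansion you describe.

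There is a small flaw in your treatment of the $\mu$-correction for $\tau^{(1)}$. For a fixed projection centre $i$ the three right-slot factors in $\tau^{(1)}$ multiply (with signs) to
\[
\frac{\Delta(l_i,l_0,l_2)}{\Delta(l_i,l_0,l_1)}\cdot\frac{\Delta(l_i,l_1,l_0)}{\Delta(l_i,l_1,l_2)}\cdot\frac{\Delta(l_i,l_2,l_1)}{\Delta(l_i,l_2,l_0)}=-1\in F^\times,
\]
which is a constant, not ``a boundary in the remaining indices''. So the $\mu$-contribution to $\tau^{(1)}$ is $\tfrac{\mu(l^*_i)_\varepsilon}{\mu(l_i)}\otimes(-1)$, which is $2$-torsion in $F\otimes F^\times$; it vanishes only because the paper works modulo $2$-torsion (as it does elsewhere), not for the structural reason you give. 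For $\tau^{(2)}$ your claim is fine: expanding $(m_i+a)\wedge(m_i+b)-(m_i+a)\wedge(m_i+c)+(m_i+b)\wedge(m_i+c)$ shows all $m_i$-terms cancel identically. The cleanest fix is simply to adopt the paper's convention $\mu\equiv 1$ and drop the correction altogether; after that your grouping-by-left-slot plan is exactly the direct computation the paper invokes.
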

\begin{proof}
Let $\omega\in\det V^*_3$ be the volume form in three-dimensional vector space $V_3$, i.e., $\Delta(l_i,l_j,l_k)=\langle \omega,l_i\wedge l_j\wedge l_k\rangle$ then $\Delta(l_i,\cdot,\cdot)$ is a volume form in $V_3/\langle l_i\rangle$. Use 
\[\Delta(l^*_i,l^*_j,l^*_k)=\Delta(l_i,l_j,l_k)+\left\lbrace\Delta(l^*_i,l^*_j,l^*_k)_\varepsilon\right\rbrace\varepsilon\]
where
\[\Delta(l^*_i,l^*_j,l^*_k)_\varepsilon=\Delta(l_{i,\varepsilon},l_j,l_k)+\Delta(l_i,l_{j,\varepsilon},l_k)+\Delta(l_i,l_j,l_{k,\varepsilon})\]
We can directly compute $\tau^2_{0,\varepsilon}\circ d'$ which gives zero.
\end{proof}

The following result is very important for proving Theorem \ref{claim4b}. Through this result we are able to see the projected-five term relation in $T\mathcal{B}_2(F)$.
\begin{lem}\label{5pt}
Let $x^*_0,\ldots,x^*_4\in \varmathbb{P}^2_{F[\varepsilon]_2}$ be 5 points in generic position, then
\begin{align}\label{5termeps}
\sum_{i=0}^4(-1)^i\left\langle r(x_i|x_0,\ldots,\hat{x}_i,\ldots,x_4);r_\varepsilon(x^*_i|x^*_0,\ldots,\hat{x}^*_i,\ldots,x^*_4)\right]=0\in T\mathcal{B}_2(F),
\end{align}
where $x^*_i=x_i+x'_i\varepsilon$ and $x_i,x'_i\in\varmathbb{P}^2_F$
\[\textbf{r}(x^*_i|x^*_0,\ldots,\hat{x}^*_i,\ldots,x^*_4)=r(x_i|x_0,\ldots,\hat{x}_i,\ldots,x_4)+r_\varepsilon(x^*_i|x^*_0,\ldots,\hat{x}^*_i,\ldots,x^*_4)\varepsilon,\]
where the LHS denotes the projected cross-ratio of any four points projected from the fifth from $x^*_0,\ldots,x^*_4\in\varmathbb{P}^2_{F[\varepsilon]_2}$. 
\end{lem}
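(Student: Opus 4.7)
The plan is to reduce the statement to the defining five-term relation \eqref{t5term} of $T\mathcal{B}_2(F)$ by first normalizing the five points via the $GL_3(F[\varepsilon]_2)$-invariance of the projected cross-ratio, and then matching the five projected ratios with the five arguments appearing in \eqref{t5term}.

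First, I would observe that $\textbf{r}(x^*_i|x^*_0,\ldots,\widehat{x^*_i},\ldots,x^*_4)$ is invariant under the natural left action of $GL_3(F[\varepsilon]_2)$ and under rescaling each $x^*_j$ by a unit of $F[\varepsilon]_2^\times$. Using this, four of the five points may be sent to the standard frame without infinitesimal parts: $x^*_0=(1:0:0)$, $x^*_1=(0:1:0)$, $x^*_2=(0:0:1)$, $x^*_3=(1:1:1)$. Rescaling the remaining $x^*_4$ to kill the $\varepsilon$-term in its last coordinate, one is left with $x^*_4=(a+a_\varepsilon\varepsilon:\,b+b_\varepsilon\varepsilon:\,1)$ with $a,b\neq 0,1$ and $a\neq b$ (the genericity hypothesis). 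So the whole lemma becomes an identity in the four parameters $a,b,a_\varepsilon,b_\varepsilon$.

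Second, I would compute the five projected cross-ratios using formula \eqref{reps1}. For each $i$ the projection from $x^*_i$ maps the remaining four points to $\mathbb{P}^1_{F[\varepsilon]_2}$, and the resulting cross-ratio is a ratio of two $2\times 2$ determinants formed in the quotient $\mathbb{A}^3_{F[\varepsilon]_2}/\langle x^*_i\rangle$, which is just a ratio of two $3\times 3$ minors of the matrix whose columns are $x^*_0,\ldots,x^*_4$. In the normalized coordinates above, the classical parts $r(x_i|\ldots)$ reduce to the five standard arguments $a$, $b$, $b/a$, $(1-b)/(1-a)$, $a(1-b)/(b(1-a))$ (up to the alternating sign coming from the position of $x_i$), matching Goncharov's classical projected five-term relation. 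The $\varepsilon$-parts $r_\varepsilon(x^*_i|\ldots)$ are then determined, via the Leibniz rule applied to the ratio of $3\times 3$ minors, by the formal derivatives $(b/a)'$, $((1-b)/(1-a))'$ and $(a(1-b)/(b(1-a)))'$ listed right after \eqref{t5term}, where $'$ is the $F$-linear derivation sending $a\mapsto a_\varepsilon$, $b\mapsto b_\varepsilon$.

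Finally, plugging these into the alternating sum (\ref{5termeps}) yields term by term the five summands of \eqref{t5term}, and the sum therefore vanishes in $T\mathcal{B}_2(F)$ by the defining relation of that group.

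The main obstacle is the bookkeeping with $\varepsilon$-parts: formula \eqref{reps1} writes $r_\varepsilon$ as a difference of two terms, each involving sums of $\{\Delta\cdot\Delta\}_\varepsilon$, and a direct unfolding is lengthy. I would sidestep this by observing that once the classical projected cross-ratios are identified with the standard five arguments, the identification of the $\varepsilon$-parts with the prescribed derivatives follows automatically from the chain rule in $F[\varepsilon]_2$: if $\phi\colon F[\varepsilon]_2\to F[\varepsilon]_2$ is any rational map sending $a+a_\varepsilon\varepsilon\mapsto\phi(a)+\phi'(a)a_\varepsilon\varepsilon$ and similarly in several variables, then it automatically respects the derivation $'$, so that the $\varepsilon$-part of each projected cross-ratio is forced to be the derivative of its classical part in the sense of \eqref{t5term}. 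This reduces the whole lemma to the classical projected five-term relation in $\mathcal{B}_2(F)$ (Lemma~2.18 of \cite{Gonc}) together with the universal $F$-linearity of the derivation, rather than a direct six-line determinant computation.
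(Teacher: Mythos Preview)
Your proposal is correct and follows essentially the same strategy as the paper's own proof: normalize the five points via the action of $PGL_3(F[\varepsilon]_2)$ so that only four free parameters remain, then identify the resulting alternating sum of projected cross-ratios term by term with the defining relation \eqref{t5term} of $T\mathcal{B}_2(F)$. Your chain-rule observation neatly packages what the paper carries out as an explicit determinant computation; the only small caveat is that, depending on the precise normalization, matching the last two summands to \eqref{t5term} may require invoking the inversion relation in $T\mathcal{B}_2(F)$, as the paper notes.
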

\begin{proof}
Consider five points $y_0,\ldots,y_4\in \varmathbb{P}^1_F$ in generic position. We can write the five-term relation in terms of cross-ratios in $\mathcal{B}_2(F)$ as (see Proposition 4.5 (2)b in \cite{PandG}):
\[\sum_{i=0}^4(-1)^i[r(y_0,\ldots,\hat{y}_i,\ldots,y_4)]_2=0\]
These five points depend on 2 parameters modulo the action of $PGL_2(F)$, whose action on $\varmathbb{P}^1_F$ is 3-fold transitive,  so we can express these five points with two variables modulo this action, we can put
\begin{align*}
\left(y_0,\ldots,y_4\right)=\left(\left(\begin{array}{c}1\\0\end{array}\right),\left(\begin{array}{c}0\\1\end{array}\right),\left(\begin{array}{c}1\\1\end{array}\right),\left(\begin{array}{c}\frac{1}{a}\\1\end{array}\right),\left(\begin{array}{c}\frac{1}{b}\\1\end{array}\right)\right),
\end{align*}
then we get one of the form of five-term relation in two variables (needs to use inversion in the last two terms).
\[[a]_2-[b]_2+\left[\frac{b}{a}\right]_2+\left[\frac{1-a}{1-b}\right]_2-\left[\frac{1-\frac{1}{a}}{1-\frac{1}{b}}\right]_2=0.\]
Now we consider five points $y^*_0,\ldots,y^*_4\in\varmathbb{P}^1_{F[\varepsilon]_2}$, in generic position, where $y^*_i=y_i+y'_{i}\varepsilon$ for $y_i,y'_{i}\in\varmathbb{P}^1_F$. A generic $2\times2$ matrix in $PGL_2(F[\varepsilon]_2)$ depends on $6=2(2\times2)-2(1)$ parameters, while each point in $\varmathbb{P}^1_{F[\varepsilon]_2}$ depends on 2 parameters, so these five points in $\varmathbb{P}^1_{F[\varepsilon]_2}$ modulo the action of $PGL_2(F[\varepsilon]_2)$ have 4 parameters. Now we can express them by using four variables we choose:
\begin{align*}
\left(y^*_0,\ldots,y^*_4\right)=\left(\left(\begin{array}{c}1\\0\end{array}\right),\left(\begin{array}{c}0\\1\end{array}\right),\left(\begin{array}{c}1\\1\end{array}\right),\left(\begin{array}{c}\frac{1}{a}-\frac{a'}{a^2}\varepsilon\\1\end{array}\right),\left(\begin{array}{c}\frac{1}{b}-\frac{b'}{b^2}\varepsilon\\1\end{array}\right)\right).
\end{align*}
We calculate all possible determinants which are the following:
\begin{align*}
\Delta(y_0,y_1)=\Delta(y_0,y_2)=\Delta(y_0,y_3)=\Delta(y_0,y_4)=1,\Delta(y_1,y_2)=-1,\\
\Delta(y_1,y_3)=-\frac{1}{a},\Delta(y_1,y_4)=-\frac{1}{b},\Delta(y_2,y_3)=1-\frac{1}{a},\Delta(y_2,y_4)=1-\frac{1}{b}\\
\Delta(y^*_0,y^*_1)_\varepsilon=\Delta(y^*_0,y^*_2)_\varepsilon=\Delta(y^*_0,y^*_3)_\varepsilon=\Delta(y^*_0,y^*_4)_\varepsilon=\Delta(y^*_1,y^*_2)_\varepsilon=0\\
\Delta(y^*_1,y^*_3)_\varepsilon=\Delta(y^*_2,y^*_3)_\varepsilon=\frac{a'}{a^2},\Delta(y^*_1,y^*_4)_\varepsilon=\Delta(y^*_2,y^*_4)_\varepsilon=\frac{b'}{b^2}
\end{align*}
For $y^*_0,\ldots,y^*_4\in\varmathbb{P}^1_{F[\varepsilon]_2}$, we can write the following expression in $T\mathcal{B}_2(F)$
\[\sum^4_{i=0}(-1)^i\left\langle r(y_0,\ldots,\hat{y}_i,\ldots,y_4);r_\varepsilon(y^*_0,\ldots,\hat{y}^*_i,\ldots,y^*_4)\right]_2\]
If we expand the above expression and we put all determinants in it we will get the following expression in two variables.
\begin{align}
\left\langle a;a'\right]_2-&\left\langle b;b'\right]_2+\left\langle\frac{b}{a};\frac{ab'-a' b}{a^2}\right]_2-\left\langle\frac{1-b}{1-a};\frac{(1-b)a'-(1-a)b'}{(1-a)^2}\right]_2\notag\\
+&\left\langle\frac{a(1-b)}{b(1-a)};\frac{b(1-b)a'-a(1-a)b'}{(b(1-a))^2}\right]_2\notag
\end{align}
From \eqref{5termeps} it is clear that the above is the LHS of the five-term relation in $T\mathcal{B}_2(F)$. We will reduce the claim to this latter form of five-term relation.

Consider $x_0,\ldots,x_4\in\varmathbb{P}^2_F$ in generic position. These five points also depend on 2 parameters modulo the action of $PGL_2(F)$, so we can express these five points in terms of two variables by the following choice:
\[\left(x_0,\ldots,x_4\right)=\left(\left(\begin{array}{c}1\\0\\0\end{array}\right),\left(\begin{array}{c}0\\1\\0\end{array}\right),\left(\begin{array}{c}0\\0\\1\end{array}\right),\left(\begin{array}{c}1\\1\\1\end{array}\right),\left(\begin{array}{c}\frac{1}{b}\\ \frac{1}{a}\\1\end{array}\right)\right)\]
We compute all possible $3\times3$ determinants of the above and put them in the expansion of the following:
\[\sum^4_{i=0}(-1)^i\left[r(x_i|x_0,\ldots,\hat{x}_i,\ldots,x_4)\right]_2\in \mathcal{B}_2(F),\]
we get the following expression in two variables
\[[a]_2-b]_2+\left[\frac{b}{a}\right]_2+\left[\frac{1-a}{1-b}\right]_2-\left[\frac{1-\frac{1}{a}}{1-\frac{1}{b}}\right]_2,\]
clearly the above is the LHS of one version of five-term relation in $\mathcal{B}_2(F)$.

Since by assumption $x^*_0,\ldots,x^*_4\in \varmathbb{P}^2_{F[\varepsilon]_2}$ are 5 points in generic position, we can express them modulo the action of $PGL_3(F[\varepsilon]_2)$ into 4 parameters then we can choose these points in terms of four variables in the following way:
\[\left(x^*_0,\ldots,x^*_4\right)=\left(\left(\begin{array}{c}1\\0\\0\end{array}\right),\left(\begin{array}{c}0\\1\\0\end{array}\right),\left(\begin{array}{c}0\\0\\1\end{array}\right),\left(\begin{array}{c}1\\1\\1\end{array}\right),\left(\begin{array}{c}\frac{1}{b}-\frac{b'}{b^2}\varepsilon\\ \frac{1}{a}-\frac{a'}{a^2}\varepsilon\\1\end{array}\right)\right)\]
We compute all possible $3\times3$ determinants and substitute them in an expansion of the following:
\[\sum_{i=0}^4(-1)^i\left\langle r(x_i|x_0,\ldots,\hat{x}_i,\ldots,x_4);r_\varepsilon(x^*_i|x^*_0,\ldots,\hat{x}^*_i,\ldots,x^*_4)\right]_2\in T\mathcal{B}_2(F),\]
we get
\begin{align}
\left\langle a;a'\right]_2-&\left\langle b;b'\right]_2+\left\langle\frac{b}{a};\frac{ab'-a' b}{a^2}\right]_2-\left\langle\frac{1-b}{1-a};\frac{(1-b)a'-(1-a)b'}{(1-a)^2}\right]_2\notag\\
+&\left\langle\frac{a(1-b)}{b(1-a)};\frac{b(1-b)a'-a(1-a)b'}{(b(1-a))^2}\right]_2\notag
\end{align}
which is the five-term expression in $T\mathcal{B}_2(F)$ up to invoking the inversion relation for the last two terms, which also holds in $T\mathcal{B}_2(F)$ 
\end{proof}

Lemma \ref{5pt} indicates that we now have the projected five-term relation in $T\mathcal{B}_2(F)$ and this relation will help us to prove the commutative diagram for weight $n=3$ in the tangential case.

%Let $\textbf{r}^*$ be the cross-ratio of any four points from $x^*_0,\ldots,x^*_4\in\varmathbb{A}^3_{F[\varepsilon]_2}$. We know that it can also be written as $r+r_\varepsilon\varepsilon$ for $r,r_\varepsilon\in\varmathbb{A}^3_F$. It is clear from the calculation in $\S$\ref{tbsdb} that we have a unique $r_\varepsilon$ for each $r$ in $F$ and lemma \ref{Gon5term} shows that
%\[\sum_{i=0}^4(-1)^i\left[ r(x_i|x_0,\ldots,\hat{x}_i,\ldots,x_4)\right]=0 \in \mathcal{B}_2(F)\]
%is the five-term relation in $\mathcal{B}_2(F)$ and by using definition of $T\mathcal{B}_2(F)$, relation (\ref{5termeps}) is five-term relation in $T\mathcal{B}_2(F)$.
\begin{prop}\label{one2}
The map $C_5(\varmathbb{A}^3_{F[\varepsilon]_2})\xrightarrow{d'}C_4(\varmathbb{A}^2_{F[\varepsilon]_2})\xrightarrow{\tau^2_{1,\varepsilon}}T\mathcal{B}_2(F)$ is zero.
\end{prop}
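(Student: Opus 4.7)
The plan is to show that $\tau^2_{1,\varepsilon}\circ d'$, applied to a generic 5-tuple $(l^*_0,\ldots,l^*_4)\in C_5(\mathbb{A}^3_{F[\varepsilon]_2})$, is exactly the left-hand side of the projected five-term relation established in Lemma \ref{5pt}, and is therefore zero in $T\mathcal{B}_2(F)$.

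First I would spell out $d'$: on a configuration of 5 vectors in $\mathbb{A}^3_{F[\varepsilon]_2}$ in generic position, $d'$ is the Grassmannian vertical differential that, for each index $i$, projects the remaining four vectors through $l^*_i$ to obtain a configuration of four points in $\mathbb{A}^2_{F[\varepsilon]_2}$, with the usual alternating sign:
\[
d'(l^*_0,\ldots,l^*_4)\;=\;\sum_{i=0}^{4}(-1)^i\bigl(l^*_i\,|\,l^*_0,\ldots,\hat{l}^*_i,\ldots,l^*_4\bigr).
\]
Applying $\tau^2_{1,\varepsilon}$ term-by-term and using its definition in terms of the cross-ratio pair $(r,r_\varepsilon)$ of four projected points, we obtain
\[
\tau^2_{1,\varepsilon}\circ d'\,(l^*_0,\ldots,l^*_4)\;=\;\sum_{i=0}^{4}(-1)^i\bigl\langle r(l_i|l_0,\ldots,\hat{l}_i,\ldots,l_4);\,r_\varepsilon(l^*_i|l^*_0,\ldots,\hat{l}^*_i,\ldots,l^*_4)\bigr].
\]

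The right-hand side is precisely the expression appearing in equation (\ref{5termeps}) of Lemma \ref{5pt}. Genericity of $(l^*_0,\ldots,l^*_4)$ in $\mathbb{A}^3_{F[\varepsilon]_2}$ ensures that the projected four-tuples lie in generic position in $\mathbb{P}^2_{F[\varepsilon]_2}$, so the hypotheses of Lemma \ref{5pt} are satisfied for the projected configuration (after passing from $\mathbb{A}^3$ to $\mathbb{P}^2$, which is harmless because $\tau^2_{1,\varepsilon}$ is defined via ratios of $2\times 2$ determinants and therefore depends only on the projective classes).

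The main subtlety—but not really an obstacle, since it is already handled in the construction—is verifying that the cross-ratio $r$ and its $\varepsilon$-component $r_\varepsilon$ built from $d'(l^*_0,\ldots,l^*_4)$ agree with the projected cross-ratio $\mathbf{r}(x^*_i|x^*_0,\ldots,\hat{x}^*_i,\ldots,x^*_4)$ of Lemma \ref{5pt}. This amounts to checking that choosing a volume form on $V_3/\langle l_i\rangle$ and taking $2\times 2$ determinants of the residual vectors yields the same element $\mathbf{r}\in F[\varepsilon]_2$ as in the statement of the lemma; this was already noted in the text to follow from the definition of $\tau^2_{1,\varepsilon}$ being independent of the lengths of vectors and the chosen volume, since $r$ and $r_\varepsilon$ are ratios of $2\times 2$ determinants. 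Once this identification is in place, Lemma \ref{5pt} applies verbatim and gives $\tau^2_{1,\varepsilon}\circ d'=0$, completing the proof.
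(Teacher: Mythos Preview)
Your proof is correct and follows exactly the same approach as the paper: compute $\tau^2_{1,\varepsilon}\circ d'$ directly, recognise the resulting alternating sum as the projected five-term expression of Lemma~\ref{5pt}, and conclude that it vanishes in $T\mathcal{B}_2(F)$. Your additional remarks on genericity and on the well-definedness of the projected cross-ratio are slightly more explicit than the paper's version, but the argument is the same.
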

\begin{proof}
We can directly calculate $\tau^2_{1\varepsilon}\circ d'$.
\begin{align}
\tau^2_{1,\varepsilon}\circ d'(l^*_0,\ldots,l^*_4)=\tau^2_{1,\varepsilon}\left(\sum^4_{i=0}(-1)^i\left(l^*_i|l^*_0,\ldots,\hat{l}^*_i,\ldots,l^*_4\right)\right)\notag \\
=\sum^4_{i=0}(-1)^i\left\langle r\left(l_i|l_0,\ldots,\hat{l}_i\ldots,l_4\right);r_\varepsilon\left(l^*_i|l^*_0,\ldots,\hat{l}^*_i,\ldots,l^*_4\right)\right]_2
\end{align}
The above is the projected five term relation in $T\mathcal{B}_2(F)$ by Lemma \ref{5pt}.
\end{proof}

Theorem \ref{claim2} shows that the diagram \eqref{tbicomp2} is commutative and Propositions \ref{zero2} and \ref{one2} shows that we have formed a bicomplex between the Grassmannian complex and  Cathelineau's tangential complex.

\section{Trilogarithmic Complexes}\label{tbstb}
We have already discussed the tangent group (or $\varmathbb{Z}$-module) $T\mathcal{B}_2(F)$ over $F[\varepsilon]_2$ in $\S$\ref{tbsdb}. In this section we will discuss group $T\mathcal{B}_3(F)$ and its functional equations and will connect Grassmannian complex and tangential complex to Goncharov complex.

\subsection{Definition and functional equations of $T\mathcal{B}_3(F)$:}
The $\varmathbb{Z}$-module $T\mathcal{B}_3(F)$ over $F[\varepsilon]_2$ is defined as the group generated by:
\[\langle a;b]=[a+b\varepsilon]-[a]\in\varmathbb{Z}\left[F[\varepsilon]_2\right], \quad a,b \in F,\quad a\neq0,1\]
and quotiented by the kernel of the following map
\[\partial_{\varepsilon,3}:\varmathbb{Z}\left[F[\varepsilon]_2\right]\rightarrow T\mathcal{B}_2(F)\otimes F^\times\oplus F\otimes \mathcal{B}_2(F),\langle a;b]\mapsto \langle a;b]_2\otimes a+\frac{b}{a}\otimes [a]_2\]
Now we can say that $\langle a;b]_3\in T\mathcal{B}_3(F)\subset\varmathbb{Z}[F[\varepsilon]_2]/\ker\partial_{\varepsilon,3}$

We have the following relations which are satisfied in $T\mathcal{B}_3(F)$.
\begin{enumerate}
\item The three-term relation.
\[\langle 1-a;(1-a)_\varepsilon]_3-\langle a;a_\varepsilon]_3-\left\langle 1-\frac{1}{a};\left(1-\frac{1}{a}\right)_\varepsilon\right]_3=0 \in T\mathcal{B}_3(F)\]

\item The inversion relation
\[\langle a;a_\varepsilon]_3=\left\langle \frac{1}{a};\left(\frac{1}{a}\right)_\varepsilon\right]_3\]

\item The Cathelineau 22-term relation (\cite{PandG})

This relation $J(a,b,c)$ for the indeterminates $a,b,c$ can be written in this way:
\begin{equation}\label{22base}
J(a,b,c)=\left[\left[a,c\right]\right]-\left[\left[b,c\right]\right]+a\left[\left[\frac{b}{a},c\right]\right]+(1-a)\left[\left[\frac{1-b}{1-a},c\right]\right],
\end{equation}
where
\[\left[\left[a,b\right]\right]=(b-a)\tau(a,b)+\frac{1-b}{1-a}\sigma(a)+\frac{1-a}{1-b}\sigma(b),\]
while $\tau(a,b)$ is defined via five term relation and $\star$-action. We take $\langle x_i;x_{i,\varepsilon}]_3$ with coefficient $\frac{1}{1-x_i}$ which is handled by $\star$-action. 
\begin{align*}
\tau(a,b)=&\left\langle a;a_\varepsilon\cdot\frac{1}{1-a}\right]_3-\left\langle b;b_\varepsilon\cdot\frac{1}{1-b}\right]_3+\left\langle \frac{b}{a};\left(\frac{b}{a}\right)_\varepsilon\cdot\frac{1}{a-b}\right]_3\\
-&\left\langle \frac{1-b}{1-a};\left(\frac{1-b}{1-a}\right)_\varepsilon\cdot\frac{1}{b-a}\right]_3-\left\langle \frac{a(1-b)}{b(1-a)};\left(\frac{a(1-b)}{b(1-a)}\right)_\varepsilon\cdot\frac{1}{b-a}\right]_3
\end{align*}
and
\[\sigma(a)=\langle a;a_\varepsilon\cdot a]_3+\langle 1-a;(1-a)_\varepsilon\cdot (1-a)]_3.\]
Then we can calculate Cathelineau's 22-term expression by substituting all values in (\ref{22base}).
\begin{align}\label{22eps}
J(a,b,c)=&\langle a;a_\varepsilon c]_3 -\langle b;b_\varepsilon c]_3+\langle c;c_\varepsilon(a-b+1)]_3\notag\\
+&\langle 1-a;(1-a)_\varepsilon(1-c)]_3-\langle1-b;(1-b)_\varepsilon(1-c)]_3\notag\\
+&\langle1-c;(1-c)_\varepsilon(b-a)]_3-\left\langle \frac{c}{a};\left(\frac{c}{a}\right)_\varepsilon\right]_3+\left\langle\frac{c}{b};\left(\frac{c}{b}\right)_\varepsilon\right]_3+\left\langle \frac{b}{a};\left(\frac{b}{a}\right)_\varepsilon c\right]_3\notag\\
-&\left\langle \frac{1-c}{1-a};\left(\frac{1-c}{1-a}\right)_\varepsilon\right]_3+\left\langle \frac{1-c}{1-b};\left(\frac{1-c}{1-b}\right)_\varepsilon\right]_3+\left\langle \frac{1-b}{1-a};\left(\frac{1-b}{1-a}\right)_\varepsilon c\right]_3\notag\\
+&\left\langle\frac{a(1-c)}{c(1-a)};\left(\frac{a(1-c)}{c(1-a)}\right)_\varepsilon\right]_3-\left\langle\frac{ca}{b};\left(\frac{ca}{b}\right)_\varepsilon\right]_3-\left\langle \frac{b(1-c)}{c(1-b)};\left(\frac{b(1-c)}{c(1-b)}\right)_\varepsilon\right]_3\notag\\
+&\left\langle \frac{a-b}{a};\left(\frac{a-b}{a}\right)_\varepsilon(1-c)\right]_3+\left\langle\frac{b-a}{1-a};\left(\frac{b-a}{1-a}\right)_\varepsilon(1-c)\right]_3\notag\\
+&\left\langle \frac{c(1-a)}{1-b};\left(\frac{c(1-a)}{1-b}\right)_\varepsilon\right]_3-\left\langle \frac{(1-c)a}{a-b};\left(\frac{(1-c)a}{a-b}\right)_\varepsilon\right]_3\notag\\
-&\left\langle \frac{(1-c)(1-a)}{b-a};\left(\frac{(1-c)(1-a)}{b-a}\right)_\varepsilon\right]_3\notag\\
+&\left\langle\frac{(1-c)b}{c(a-b)};\left(\frac{(1-c)b}{c(a-b)}\right)_\varepsilon\right]_3+\left\langle\frac{(1-c)(1-b)}{c(b-a)};\left(\frac{(1-c)(1-b)}{c(b-a)}\right)_\varepsilon\right]_3
\end{align}
\end{enumerate}
For the special condition $a_\varepsilon=a(1-a)$,$b_\varepsilon=b(1-b)$ and $c_\varepsilon=c(1-c)$, this 22-term expression becomes zero in $T\mathcal{B}_3(F)$.

One can write the following complex for $T\mathcal{B}_3(F)$.
\begin{displaymath}
T\mathcal{B}_3(F)\xrightarrow{\partial_\varepsilon}\substack{T\mathcal{B}_2(F)\otimes F^\times\\ \oplus\\ F\otimes \mathcal{B}_2(F)}\xrightarrow{\partial_\varepsilon}\left(F\otimes\bigwedge{}^2F^\times\right)\oplus\left(\bigwedge{}^3F\right)
\end{displaymath}
\subsection{Mapping Grassmannian complexes to Tangential complexes in weight 3:}
In this subsection, we will try to find morphisms between this complex and the Grassmannian complex and after a long computation we see that each square of the following diagram is commutative. Consider the following diagram
\begin{displaymath}\label{bicomp3b}
\xymatrix{
C_6(\varmathbb{A}^3_{F[\varepsilon]_2})\ar[r]^{d}\ar[d]^{\tau^3_{2,\varepsilon}}     & C_5(\varmathbb{A}^3_{F[\varepsilon]_2})\ar[r]^{d}\ar[d]^{\tau_{1,\varepsilon}^{3}}        &C_4(\varmathbb{A}^3_{F[\varepsilon]_2})\ar[d]^{\tau_{0,\varepsilon}^{3}}\\
T\mathcal{B}_3(F)\ar[r]^{\partial_\varepsilon\qquad\qquad}     & \left(T\mathcal{B}_2(F)\otimes F^\times\right) \oplus  \left(F\otimes \mathcal{B}_2(F)\right)\ar[r]^{\qquad\partial_\varepsilon}          & \left(F\otimes \bigwedge^2 F^\times\right)\oplus \left(\bigwedge^3F\right)}\ \tag{4.3a}
\end{displaymath}
Here we define
\[\textbf{r}(l^*_0|l^*_1,l^*_2,l^*_3,l^*_4)=\frac{\Delta(l^*_0,l^*_1,l^*_4)\Delta(l^*_0,l^*_2,l^*_3)}{\Delta(l^*_0,l^*_1,l^*_3)\Delta(l^*_0,l^*_2,l^*_4)}\]
The projected cross-ratio is defined here 
\[\textbf{r}(l^*_0|l^*_1,l^*_2,l^*_3,l^*_4)=r(l_0|l_1,l_2,l_3,l_4)+r_\varepsilon(l^*_0|l^*_1,l^*_2,l^*_3,l^*_4)\varepsilon 
\]
where
\[r(l_0|l_1,l_2,l_3,l_4)=\frac{\Delta(l_0,l_1,l_4)\Delta(l_0,l_2,l_3)}{\Delta(l_0,l_1,l_3)\Delta(l_0,l_2,l_4)}\]
\[r_\varepsilon(l^*_0|l^*_1,l^*_2,l^*_3,l^*_4)=\frac{u}{\Delta(l_0,l_1,l_3)^2\Delta(l_0,l_2,l_4)^2}\]
\begin{align}
u=&-\Delta(l_0,l_1,l_4)\Delta(l_0,l_2,l_3)\{\Delta(l_0,l_1,l_3)\Delta(l^*_0,l^*_2,l^*_4)_\varepsilon+\Delta(l_0,l_2,l_4)\Delta(l^*_0,l^*_1,l^*_3)_\varepsilon\}\notag\\
&+\Delta(l_0,l_1,l_3)\Delta(l_0,l_2,l_4)\{\Delta(l_0,l_1,l_4)\Delta(l^*_0,l^*_2,l^*_3)_\varepsilon+\Delta(l_0,l_2,l_3)\Delta(l^*_0,l^*_1,l^*_4)_\varepsilon\}\notag
\end{align}
where the morphisms between the two complexes are defined as follows:
\begin{align}
&\tau^3_{0,\varepsilon}(l^*_0,\ldots,l^*_3)\notag\\
=&\sum_{i=0}^3(-1)^{i}\Bigg(\frac{\Delta(l^*_0,\ldots,\hat{l}^*_i,\ldots,l^*_3)_\varepsilon}{\Delta(l_0,\ldots,\hat{l}_i,\ldots,l_3)}\otimes\frac{\Delta(l_0,\ldots,\hat{l}_{i+1},\ldots,l_3)}{\Delta(l_0,\ldots,\hat{l}_{i+2},\ldots,l_3)}\notag\\
&\quad\quad\quad\wedge\frac{\Delta(l_0,\ldots,\hat{l}_{i+3},\ldots,l_3)}{\Delta(l_0,\ldots,\hat{l}_{i+2},\ldots,l_3)}+
\bigwedge_{\substack{j=0\\j\neq i}}^3\frac{\Delta(l^*_0,\ldots,\hat{l}^*_j,\ldots,l^*_3)_\varepsilon}{\Delta(l_0,\ldots,\hat{l}_j,\ldots,l_3)}\Bigg),\quad i\mod 4,\notag
\end{align}
\begin{align}
\tau^3_{1,\varepsilon}(l^*_0,\ldots,l^*_4)\notag\\ =-\frac{1}{3}\sum_{i=0}^4(-1)^i\Bigg(&\left\langle r(l_i|l_0,\ldots,\hat{l}_i,\ldots,l_4);r_\varepsilon(l^*_i|l^*_0,\ldots,\hat{l^*}_i,\ldots,l^*_4)\right]_2\otimes \prod_{i\neq j}\Delta(\hat{l}_i,\hat{l}_j)\notag\\
&+\sum_{\substack{j=0 \\ j\neq i}}^4\left(\frac{\Delta(l^*_0,\ldots,\hat{l}^*_i,\ldots,\hat{l}^*_j,\ldots,l^*_4)_\varepsilon}{\Delta(l_0,\ldots,\hat{l}_i,\ldots,\hat{l}_j,\ldots,l_4)}\right)\otimes\left[r(l_i|l_0,\ldots,\hat{l}_i,\ldots,l_4)\right]_2\Bigg)\notag
\end{align}
and
\begin{align}
\tau^3_{2\varepsilon}(l^*_0,\ldots,l^*_5)=\frac{2}{45}\text{Alt}_6\left\langle r_3(l_0,\ldots,l_5);r_{3,\varepsilon}(l^*_0,\ldots,l^*_5)\right]_3\notag
\end{align}
where
\[r_3(l_0,\ldots,l_5)=\frac{(l_0l_1l_3)(l_1l_2l_4)(l_2l_0l_5)}{(l_0l_1l_4)(l_1l_2l_5)(l_2l_0l_3)}\]
and
\begin{align}
r_{3,\varepsilon}&(l^*_0,\ldots,l^*_5)\notag\\
=&\frac{\{(l^*_0l^*_1l^*_3)(l^*_1l^*_2l^*_4)(l^*_2l^*_0l^*_5)\}_\varepsilon}{(l_0l_1l_4)(l_1l_2l_5)(l_2l_0l_3)}-\frac{(l_0l_1l_3)(l_1l_2l_4)(l_2l_0l_5)}{(l_0l_1l_4)(l_1l_2l_5)(l_2l_0l_3)}\frac{\{(l^*_0l^*_1l^*_4)(l^*_1l^*_2l^*_5)(l^*_2l^*_0l^*_3)\}_\varepsilon}{(l_0l_1l_4)(l_1l_2l_5)(l_2l_0l_3)}
\end{align}
the map $\partial_\varepsilon$ is defined as
\begin{align}
\partial_\varepsilon&\left(\langle a;b]_2\otimes c+x\otimes[y]_2\right)\notag\\
=&\left(-\frac{b}{1-a}\otimes a\wedge c-\frac{b}{a}\otimes(1-a)\wedge c+x\otimes(1-y)\wedge y\right)+\left(\frac{b}{1-a}\wedge\frac{b}{a}\wedge x\right)\notag
\end{align}
and
\[\partial_\varepsilon(\langle a;b]_3)= \langle a;b]_2\otimes a+\frac{b}{a}\otimes [a]_2 \]

\begin{thm}\label{claim4a}
The right square of the diagram (\ref{bicomp3b}), i.e.
\begin{displaymath}\label{claim4adia}
\xymatrix{
C_5(\varmathbb{A}^3_{F[\varepsilon]_2})\ar[d]^{\tau^3_{1,\varepsilon}}\ar[rr]^{d}&&C_4(\varmathbb{A}^3_{F[\varepsilon]_2})\ar[d]^{\tau^3_{0,\varepsilon}}\\
\left(T\mathcal{B}_2(F)\otimes F^\times\right)\oplus\left(F\otimes \mathcal{B}_2(F)\right)\ar[rr]^{\partial_\varepsilon}&&\left(F\otimes\bigwedge^2F^\times\right)\oplus\left(\bigwedge^3F\right)}
\end{displaymath}

is commutative, i.e. $\tau^3_{0,\varepsilon}\circ d=\partial_\varepsilon\circ\tau^3_{1,\varepsilon}$
\end{thm}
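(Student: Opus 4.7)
The plan is to verify the identity $\tau^3_{0,\varepsilon}\circ d = \partial_\varepsilon\circ\tau^3_{1,\varepsilon}$ by direct computation on a generic 5-tuple $(l^*_0,\ldots,l^*_4)\in C_5(\varmathbb{A}^3_{F[\varepsilon]_2})$, treating each summand of the target $(F\otimes\bigwedge^2 F^\times)\oplus\bigwedge^3 F$ separately. On the left, $d$ produces an alternating sum of five 4-tuples, and $\tau^3_{0,\varepsilon}$ contributes to each both a tensor piece in $F\otimes\bigwedge^2 F^\times$ and a wedge piece in $\bigwedge^3 F$. On the right, $\tau^3_{1,\varepsilon}$ has, per index $i$, one ``logarithmic'' generator $\langle r;r_\varepsilon]_2\otimes\prod_{j\neq i}\Delta(\hat l_i,\hat l_j)$ and four generators of the form $(\Delta_\varepsilon/\Delta)\otimes[r]_2$, which are then fed through the three components of $\partial_\varepsilon$ given in the statement.

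For the $\bigwedge^3 F$ component, only the logarithmic summands contribute on the right, via the wedge piece $b/(1-a)\wedge b/a\wedge c$ of $\partial_\varepsilon$. Using the $\varmathbb{A}^3$-analog of \eqref{1mr} to rewrite $1-r(l_i|\cdots)$ as a ratio of products of triple-determinants, each factor of the resulting wedge becomes a ratio of triple-determinant $\varepsilon$-parts to triple-determinants; the alternating 5-sum in $i$ should then reorganize, after invoking the Siegel-type identity \eqref{genid} for $3\times 3$ determinants, into the wedge piece of $\tau^3_{0,\varepsilon}\circ d$.

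For the $F\otimes\bigwedge^2 F^\times$ component, both types of summands contribute. The $\langle r;r_\varepsilon]_2\otimes c$ pieces yield $-(r_\varepsilon/(1-r))\otimes r\wedge c - (r_\varepsilon/r)\otimes(1-r)\wedge c$, while the $(\Delta_\varepsilon/\Delta)\otimes[r]_2$ pieces yield $(\Delta_\varepsilon/\Delta)\otimes(1-r)\wedge r$. The decisive step is to apply the projected Siegel identity \eqref{newid} (in its $3\times 3$ form) to rewrite $r_\varepsilon/r$ and $r_\varepsilon/(1-r)$ as explicit linear combinations of $\Delta_\varepsilon/\Delta$ ratios; both sides then become signed sums of generators of the form $(\Delta_\varepsilon/\Delta)\otimes\Delta\wedge\Delta$, at which point the equality reduces to collecting and pairing terms.

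The principal obstacle is the combinatorial bookkeeping in the $F\otimes\bigwedge^2 F^\times$ component: the left-hand side has $5\times 4 = 20$ tensor generators from $\tau^3_{0,\varepsilon}\circ d$, while the right-hand side produces $5\times 4 = 20$ from the $[r]_2$ summand plus a considerably larger expansion from $\partial_\varepsilon$ applied to the logarithmic piece. Most cancellations should occur in pairs through antisymmetry of $\wedge$ combined with \eqref{newid}, but the signs that interleave the Koszul sign of $d$ with the cyclic index convention in the definition of $\tau^3_{0,\varepsilon}$ require careful tracking, and the overall factor $-1/3$ in $\tau^3_{1,\varepsilon}$ must balance the multiplicity with which each atomic $\Delta_\varepsilon/\Delta$ generator is produced on each side. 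Modulo this bookkeeping, the argument is entirely mechanical given Lemma \ref{lemid}.
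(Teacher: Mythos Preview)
Your overall strategy coincides with the paper's: split the target into its two summands, compute $\tau^3_{0,\varepsilon}\circ d$ directly as an alternating sum, apply $\partial_\varepsilon$ to $\tau^3_{1,\varepsilon}$, then use the identities \eqref{2did} and \eqref{newid} (in their projected $3\times3$ form) to express $r_\varepsilon/r$ and $r_\varepsilon/(1-r)$ as signed sums of ratios $\Delta(l^*_i l^*_j l^*_k)_\varepsilon/\Delta(l_il_jl_k)$, and finally match terms. The paper carries this out exactly as you outline: it records the formulas
\[
\frac{r_\varepsilon(l^*_0|l^*_1l^*_2l^*_3l^*_4)}{r(l_0|l_1l_2l_3l_4)}=\frac{(l^*_0l^*_1l^*_4)_\varepsilon}{(l_0l_1l_4)}+\frac{(l^*_0l^*_2l^*_3)_\varepsilon}{(l_0l_2l_3)}-\frac{(l^*_0l^*_2l^*_4)_\varepsilon}{(l_0l_2l_4)}-\frac{(l^*_0l^*_1l^*_3)_\varepsilon}{(l_0l_1l_3)}
\]
and the analogous one for $r_\varepsilon/(1-r)$, then groups the $F\otimes\bigwedge^2 F^\times$ side by the ten possible first factors $\frac{(l^*_il^*_jl^*_k)_\varepsilon}{(l_il_jl_k)}$, finds each occurs with coefficient $-3$, and cancels against the $-\tfrac{1}{3}$. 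Your anticipation of this bookkeeping, including the role of the $-\tfrac13$, is accurate.

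There is one misreading you should fix before executing the computation. In the $\bigwedge^3 F$ component you write that ``only the logarithmic summands contribute on the right, via the wedge piece $b/(1-a)\wedge b/a\wedge c$''. But in the paper's $\partial_\varepsilon$ the third wedge factor is $x$, the $F$-part of $x\otimes[y]_2$, not $c\in F^\times$ from $\langle a;b]_2\otimes c$; indeed $c\in F^\times$ does not even live in the additive group $F$, so $b/(1-a)\wedge b/a\wedge c$ does not land in $\bigwedge^3 F$. Concretely, the paper's \eqref{sec1q} shows the $\bigwedge^3 F$ output is
\[
-\frac{1}{3}\sum_{i=0}^{4}(-1)^i\left(-\frac{r_\varepsilon}{r}\wedge\frac{r_\varepsilon}{1-r}\wedge\sum_{j\neq i}\frac{(l^*_0\ldots\hat l^*_i\ldots\hat l^*_j\ldots l^*_4)_\varepsilon}{(l_0\ldots\hat l_i\ldots\hat l_j\ldots l_4)}\right),
\]
so \emph{both} summand types of $\tau^3_{1,\varepsilon}$ feed into $\bigwedge^3 F$: the $\langle r;r_\varepsilon]_2$ piece supplies the first two wedge factors, and the $\Delta_\varepsilon/\Delta$ piece supplies the third. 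Once you substitute the same four-term expressions for $r_\varepsilon/r$ and $r_\varepsilon/(1-r)$ and use $a\wedge a=0$ (mod $2$-torsion), this reorganises into \eqref{sec0q} as the paper indicates. With that correction your plan is correct and is essentially the paper's proof.
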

\begin{proof}
First we divide the map $\tau^3_{0,\varepsilon}=\tau^{(1)}+\tau^{(2)}$ then calculate $\tau^{(1)}\circ d(l^*_0,\ldots,l^*_4)$
\begin{align}
\tau^{(1)}\circ d(l^*_0,\ldots,l^*_4)
=\tau^3_{0,\varepsilon}\left(\sum^4_{i=0}(-1)^i(l^*_0,\ldots,\hat{l}^*_i,\ldots,l^*_4)\right)\notag
\end{align}
\begin{align}\label{first0q}
=\widetilde{\text{Alt}}_{(01234)}\Bigg(&\sum_{i=0}^3(-1)^{i}\Big(\frac{\Delta(l^*_0,\ldots,\hat{l}^*_i,\ldots,l^*_3)_\varepsilon}{\Delta(l_0,\ldots,\hat{l}_i,\ldots,l_3)}\otimes\frac{\Delta(l_0,\ldots,\hat{l}_{i+1},\ldots,l_3)}{\Delta(l_0,\ldots,\hat{l}_{i+2},\ldots,l_3)}\notag\\
&\quad\quad\quad\wedge\frac{\Delta(l_0,\ldots,\hat{l}_{i+3},\ldots,l_3)}{\Delta(l_0,\ldots,\hat{l}_{i+2},\ldots,l_3)}
\Big),\quad i\mod 4\Bigg)
\end{align}
We expand the inner sum first that contains 12 terms and passing alternation to the sum, gives us 60 different terms overall. We collect terms involving same $\frac{\Delta(l^*_i,l^*_j,l^*_k)}{\Delta(l_i,l_j,l_k)}\otimes\cdots$ together for calculation purpose. On the other hand second part of the map is the following:
\begin{align}\label{sec0q}
&\tau^{(1)}\circ d(l^*_0,\ldots,l^*_4)\notag\\
=&\widetilde{\text{Alt}}_{(01234)}\Bigg(\sum^3_{i=0}(-1)^i\bigwedge_{\substack{j=0\\j\neq i}}^3\frac{\Delta(l^*_0,\ldots,\hat{l}^*_j,\ldots,l^*_3)_\varepsilon}{\Delta(l_0,\ldots,\hat{l}_j,\ldots,l_3)}\Bigg)
\end{align}
The other side of the proof requires very long computations. For the calculation of $\partial_\varepsilon\circ\tau^3_{1,\varepsilon}$ we will use short hand $(l^*_il^*_jl^*_k)_\varepsilon$ for $\Delta(l^*_i,l^*_j,l^*_k)_\varepsilon$ and $(l_il_jl_k)$ for $\Delta((l_i,l_j,l_k)$.  First we write  $\partial_\varepsilon\circ\tau^3_{1,\varepsilon}(l^*_0,\ldots,l^*_4)$ by using the definitions above.
\begin{align}
=\partial_\varepsilon\Bigg(-\frac{1}{3}\sum_{i=0}^4(-1)^i\Big(&\left\langle r(l_i|l_0,\ldots,\hat{l}_i,\ldots,l_4);r_\varepsilon(l^*_i|l^*_0,\ldots,\hat{l^*}_i,\ldots,l^*_4)\right]_2\otimes \prod_{i\neq j}\Delta(\hat{l}_i,\hat{l}_j)\notag\\
&+\sum_{\substack{j=0 \\ j\neq i}}^4\left(\frac{\Delta(l^*_0,\ldots,\hat{l}^*_i,\ldots,\hat{l}^*_j,\ldots,l^*_4)_\varepsilon}{\Delta(l_0,\ldots,\hat{l}_i,\ldots,\hat{l}_j,\ldots,l_4)}\right)\otimes\left[r(l_i|l_0,\ldots,\hat{l}_i,\ldots,l_4)\right]_2\Big)\Bigg)\notag
\end{align}
then we divide $\partial_\varepsilon=\partial^{(1)}+\partial^{(2)}$. The first part $\partial^{(1)}\circ\tau^3_{1,\varepsilon}(l^*_0,\ldots,l^*_4)$ is
\begin{align}\label{first1q}
=-\frac{1}{3}\sum^4_{i=0}(-1)^i\Bigg(-&\frac{r_\varepsilon(l^*_i|l^*_0,\ldots,\hat{l}^*_i,\ldots,l^*_4)}{1-r(l_i|l_0,\ldots,\hat{l}_i,\ldots,l_4)}\otimes r(l_i|l_0,\ldots,\hat{l}_i,\ldots,l_4)\wedge \prod_{i\neq j}(\hat{l}_i,\hat{l}_j)\notag\\
-&\frac{r_\varepsilon(l^*_i|l^*_0,\ldots,\hat{l}^*_i,\ldots,l^*_4)}{r(l_i|l_0,\ldots,\hat{l}_i,\ldots,l_4)}\otimes (1-r(l_i|l_0,\ldots,\hat{l}_i,\ldots,l_4))\wedge\prod_{i\neq j}(\hat{l}_i,\hat{l}_j)\notag\\
+&\sum_{\substack{j=0 \\ j\neq i}}^4\left(\frac{\Delta(l^*_0,\ldots,\hat{l}^*_i,\ldots,\hat{l}^*_j,\ldots,l^*_4)_\varepsilon}{\Delta(l_0,\ldots,\hat{l}_i,\ldots,\hat{l}_j,\ldots,l_4)}\right)\otimes(1-r(l_i|l_0,\ldots,\hat{l}_i,\ldots,l_4))\notag\\
\wedge& r(l_i|l_0,\ldots,\hat{l}_i,\ldots,l_4)\Bigg)
\end{align}
The second part $\partial^{(2)}\circ\tau^3_{1,\varepsilon}(l^*_0,\ldots,l^*_4)$ is
\begin{align}\label{sec1q}
=-\frac{1}{3}\sum^4_{i=0}(-1)^i\Bigg(-&\frac{r_\varepsilon(l^*_i|l^*_0,\ldots,\hat{l}^*_i,\ldots,l^*_4)}{r(l_i|l_0,\ldots,\hat{l}_i,\ldots,l_4)}\wedge \frac{r_\varepsilon(l^*_i|l^*_0,\ldots,\hat{l}^*_i,\ldots,l^*_4)}{1-r(l_i|l_0,\ldots,\hat{l}_i,\ldots,l_4)}\notag\\
\wedge&\sum_{\substack{j=0 \\ j\neq i}}^4\left(\frac{\Delta(l^*_0,\ldots,\hat{l}^*_i,\ldots,\hat{l}^*_j,\ldots,l^*_4)_\varepsilon}{\Delta(l_0,\ldots,\hat{l}_i,\ldots,\hat{l}_j,\ldots,l_4)}\right)\Bigg)
\end{align}

then we calculate $\frac{b_\varepsilon}{a}$ and $\frac{b_\varepsilon}{1-a}$. i.e. all the values of the form  $\frac{r_\varepsilon(l^*_0|l^*_1,l^*_2,l^*_3,l^*_4)}{r(l_0|l_1,l_2,l_3,l_4)}$ and $\frac{r_\varepsilon(l^*_0|l^*_1,l^*_2,l^*_3,l^*_4)}{1-r(l_0|l_1,l_2,l_3,l_4)}$.
By using formula (\ref{reps1}) we have
\begin{align}
\frac{r_\varepsilon(l^*_0|l^*_1,l^*_2,l^*_3,l^*_4)}{r(l_0|l_1,l_2,l_3,l_4)}=\frac{(l^*_0l^*_1l^*_4)_\varepsilon}{(l_0l_1l_4)}+\frac{(l^*_0l^*_2l^*_3)_\varepsilon}{(l_0l_2l_3)}-\frac{(l^*_0l^*_2l^*_4)_\varepsilon}{(l_0l_2l_4)}-\frac{(l^*_0l^*_1l^*_3)_\varepsilon}{(l_0l_1l_3)}\notag
\end{align}
Similarly we can find this ratio for each value of $i=0,\ldots,4$. Now use formula (\ref{reps1}) as well as identities (\ref{2did}) and \eqref{newid}, we have
\[\frac{r_\varepsilon(l^*_0|l^*_1,l^*_2,l^*_3,l^*_4)}{1-r(l_0|l_1,l_2,l_3,l_4)}= \frac{(l^*_0l^*_2l^*_4)_\varepsilon
}{(l_0l_2l_4)}+\frac{(l^*_0l^*_1l^*_3)_\varepsilon}{(l_0l_1l_3)}-\frac{(l^*_0l^*_3l^*_4)_\varepsilon}{(l_0l_3l_4)}-\frac{(l^*_0l^*_1l^*_2)_\varepsilon}{(l_0l_1l_2)}\]
After calculating all these values. Expand the sums (\ref{first1q}) and (\ref{sec1q}) and put all values what we have calculated above. Let us talk about (\ref{first1q}). In this sum we have huge amount of terms, so we group them in a suitable way. First collect all the terms involving $\frac{(l^*_0l^*_1l^*_2)_\varepsilon}{(l_0l_1l_2)}\otimes \cdots$, we find that there are 6 different terms with coefficient -3 involving $\frac{(l^*_0l^*_1l^*_2)_\varepsilon}{(l_0l_1l_2)}\otimes \cdots$
\begin{align}
-3\frac{(l^*_0l^*_1l^*_2)_\varepsilon}{(l_0l_1l_2)}\otimes\Big(&(l_0l_1l_3)\wedge(l_1l_2l_3)+(l_0l_2l_4)\wedge(l_1l_2l_3)+(l_0l_1l_4)\wedge(l_0l_2l_4)\notag\\
-&(l_0l_1l_3)\wedge(l_0l_2l_3)-(l_0l_1l_4)\wedge(l_1l_2l_4)-(l_0l_2l_3)\wedge(l_1l_2l_3)\Big)\notag
\end{align}
There are exactly 10 possible terms of $\frac{(l^*_il^*_jl^*_k)_\varepsilon}{(l_il_jl_k)}$. Compute all of them individually. We will see that each will have the coefficient $-3$ that will be cancelled by $-\frac{1}{3}$ in (\ref{first1q}) and then combine 60 different terms with 6 in a group of same $\frac{(l^*_il^*_jl^*_k)_\varepsilon}{(l_il_jl_k)}$, write in the sum form then we will note that it will be the same as (\ref{first0q}). 

Computation for the second  part is relatively easy and direct. We need to put all values of the form $\frac{r_\varepsilon(l^*_0|l^*_1,l^*_2,l^*_3,l^*_4)}{r(l_0|l_1,l_2,l_3,l_4)}$ and $\frac{r_\varepsilon(l^*_0|l^*_1,l^*_2,l^*_3,l^*_4)}{1-r(l_0|l_1,l_2,l_3,l_4)}$ in (\ref{sec1q}), expand the sums, use $a\wedge a=0$ modulo 2 torsion. Here we will have simplified result which can be recombined in the sum notation which will be same as (\ref{sec0q}).
\end{proof}
\begin{thm}\label{claim4b}
The left square of the diagram (\ref{bicomp3b}), i.e. 
\begin{displaymath}
\xymatrix{
C_6(\varmathbb{A}^3_{F[\varepsilon]_2})\ar[d]^{\tau^3_{2,\varepsilon}}\ar[rr]^{d}&&C_5(\varmathbb{A}^3_{F[\varepsilon]_2})\ar[d]^{\tau^3_{1,\varepsilon}}\\
T\mathcal{B}_3(F)\ar[rr]^{\partial_\varepsilon\qquad\qquad}&&\left(T\mathcal{B}_2(F)\otimes F^\times\right)\oplus\left(F\otimes \mathcal{B}_2(F)\right)}
\end{displaymath}
is commutative i.e. $\tau^3_{2,\varepsilon}\circ \partial_\varepsilon=d\circ \tau^3_{1,\varepsilon}$
\end{thm}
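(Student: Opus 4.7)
The plan is to establish $\partial_\varepsilon\circ\tau^3_{2,\varepsilon}=\tau^3_{1,\varepsilon}\circ d$ by computing both sides explicitly and reducing their difference to the projected five-term relation of Lemma \ref{5pt}, using the key combinatorial observation (announced in the Introduction) that the infinitesimal triple-ratio decomposes as a product of two projected cross-ratios in $F[\varepsilon]_2$.

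First I would compute the left-hand side. Applying $\partial_\varepsilon(\langle a;b]_3)=\langle a;b]_2\otimes a+\tfrac{b}{a}\otimes[a]_2$ to $\tau^3_{2,\varepsilon}(l^*_0,\ldots,l^*_5)=\tfrac{2}{45}\operatorname{Alt}_6\langle r_3;r_{3,\varepsilon}]_3$ yields
\[
\partial_\varepsilon\circ\tau^3_{2,\varepsilon}(l^*_0,\ldots,l^*_5)=\tfrac{2}{45}\operatorname{Alt}_6\!\left(\langle r_3;r_{3,\varepsilon}]_2\otimes r_3+\tfrac{r_{3,\varepsilon}}{r_3}\otimes[r_3]_2\right),
\]
where $r_3=r_3(l_0,\ldots,l_5)$ and $r_{3,\varepsilon}=r_{3,\varepsilon}(l^*_0,\ldots,l^*_5)$. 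Using the explicit expression of $r_{3,\varepsilon}$ from \S\ref{gen_cross}, the ratio $r_{3,\varepsilon}/r_3$ unfolds as a signed sum of six atoms of the form $\tfrac{(l^*_il^*_jl^*_k)_\varepsilon}{(l_il_jl_k)}$, which is exactly the shape of the first coordinates appearing in $\tau^3_{1,\varepsilon}$.

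Second, I would expand the right-hand side by writing $d(l^*_0,\ldots,l^*_5)=\sum_{i=0}^5(-1)^i(l^*_0,\ldots,\hat l^*_i,\ldots,l^*_5)$ and applying $\tau^3_{1,\varepsilon}$ term by term. This produces two separate contributions: a $T\mathcal{B}_2(F)\otimes F^\times$ piece formed from the projected symbols $\langle r(l_j|\ldots);r_\varepsilon(l^*_j|\ldots)]_2$, and an $F\otimes\mathcal{B}_2(F)$ piece formed from pairs $\tfrac{(l^*_\bullet)_\varepsilon}{(l_\bullet)}\otimes[r(l_j|\ldots)]_2$. I would split the proof into matching the two components separately.

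Third, the decisive combinatorial step is to write each representative of $\operatorname{Alt}_6\langle r_3;r_{3,\varepsilon}]_2$ as a pair of projected cross-ratio symbols. Concretely, from $r_3 = (l_0l_1l_3)(l_1l_2l_4)(l_2l_0l_5)/((l_0l_1l_4)(l_1l_2l_5)(l_2l_0l_3))$ one identifies, inside the alternation, factorizations
\[
r_3 = r(l_a\,|\,\cdot\,)\cdot r(l_b\,|\,\cdot\,),
\]
whose lift to $F[\varepsilon]_2$ yields $\langle r_3;r_{3,\varepsilon}]_2 \equiv \langle r(l_a|\cdot);r_\varepsilon(l^*_a|\cdot)]_2 \,+\, a\!\star\!\langle r(l_b|\cdot);r_\varepsilon(l^*_b|\cdot)]_2$ modulo the relations of $T\mathcal{B}_2(F)$ (the multiplicativity of $\langle a;b]_2$ in $a$ being absorbed via the $\star$-action on the second slot and the $\otimes F^\times$ factor on the right). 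Summing these factorizations over $\operatorname{Alt}_6$ and matching monomials in $F^\times$ on the right-hand tensor converts the left-hand side into a formal $\mathbb{Z}$-linear combination of the six expressions $\langle r(l_i|l_0,\ldots,\hat l_i,\ldots,l_5);r_\varepsilon(l^*_i|\ldots)]_2\otimes\prod_{j\ne i}(\hat l_i,\hat l_j)$ appearing in $\tau^3_{1,\varepsilon}\circ d$. The remaining discrepancy in each residue class is precisely a projected five-term combination evaluated at five of the six points, and hence vanishes by Lemma \ref{5pt}.

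The main obstacle is the bookkeeping of the $\operatorname{Alt}_6$-expansion — after all substitutions one faces on the order of a hundred terms grouped by the ten possible $\tfrac{(l^*_il^*_jl^*_k)_\varepsilon}{(l_il_jl_k)}$ atoms — and verifying that the numerical prefactors $-\tfrac13$ and $\tfrac{2}{45}$ balance once the five-term relation is applied. For the $F\otimes\mathcal{B}_2(F)$ summand the matching is comparatively direct: substituting the closed form for $r_{3,\varepsilon}/r_3$ and using the ordinary Bloch–Suslin five-term relation for $[\,\cdot\,]_2$ on the six projected cross-ratios $r(l_i|\ldots)\in F$ reduces this part to the classical Goncharov identity underlying the weight-3 Grassmannian bicomplex, so no new infinitesimal input is needed here. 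The infinitesimal content is concentrated entirely in the $T\mathcal{B}_2(F)\otimes F^\times$ coordinate, and Lemma \ref{5pt} is exactly what makes that coordinate close.
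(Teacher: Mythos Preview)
Your overall architecture matches the paper: compute $\partial_\varepsilon\circ\tau^3_{2,\varepsilon}$, factor the triple-ratio through projected cross-ratios, and close the $T\mathcal{B}_2(F)\otimes F^\times$ component with Lemma~\ref{5pt}. The $F\otimes\mathcal{B}_2(F)$ component is indeed handled by the classical five-term relation, as you say.

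The gap is in your third step. You assert that $\langle r_3;r_{3,\varepsilon}]_2 \equiv \langle r(l_a|\cdot);r_\varepsilon]_2 + a\star\langle r(l_b|\cdot);r_\varepsilon]_2$ by ``multiplicativity of $\langle a;b]_2$ in $a$.'' No such multiplicativity holds in $T\mathcal{B}_2(F)$: the dilogarithm is not a homomorphism, and nothing in the defining relation~(\ref{t5term}), the two-term, inversion, or four-term relations gives $\langle xy;(xy)']_2$ as a two-term sum in $\langle x;\cdot]_2$ and $\langle y;\cdot]_2$. The $\star$-action only rescales the second slot; it cannot absorb the failure of additivity in the first.

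What the paper actually does is more delicate. After reducing by the $(l_0l_1l_2)(l_3l_4l_5)$ and $(l_3l_4)$ symmetries to a single tensor factor $\otimes(l_0l_1l_3)$, it pairs the triple-ratio with its image under the transposition $(l_0l_3)$, writes each as a \emph{quotient} (not product) of two projected cross-ratios, and then applies the genuine five-term relation~(\ref{t5term}) in $T\mathcal{B}_2(F)$ to this pair. That relation produces not two but several terms, including a residual mixed term of the form $\big\langle \tfrac{r(l_2|\ldots)}{r(l_1|\ldots)};\ldots\big]_2\otimes(l_0l_1l_3)$; this extra term is killed by a further inner alternation over the $S_3\times S_3$ stabilizing $\{l_0,l_1,l_3\}$ and $\{l_2,l_4,l_5\}$ (specifically, the odd permutation $(l_2l_5)$ forces it to vanish). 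Only after this does one land on pure projected cross-ratio symbols and invoke Lemma~\ref{5pt}. Your sketch skips both the five-term expansion and the vanishing argument for the residual term, replacing them with a nonexistent multiplicative splitting; as written, the reduction to Lemma~\ref{5pt} does not go through.
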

\begin{proof}
The map $\tau^3_{2,\varepsilon}$ gives 720 terms and due to symmetry (cyclic and inverse) we find 120 different ones (up to inverse). We will use the same technique here which we have used in the proof of theorem \ref{claim4a}.
By definition, we have  
\[\tau^3_{2,\varepsilon}(l^*_0,\ldots,l^*_5)=\frac{2}{45}\text{Alt}_6\left\langle r_3(l_0,\ldots,l_5);r_{3,\varepsilon}(l^*_0,\ldots,l^*_5)\right]_3 \]
For convenience and similar to our previous conventions, we will abbreviate our notation by dropping $\Delta$ and commas.
%$r_3(l_0l_1l_2l_3l_4l_5)$, $r_{3,\varepsilon}(l^*_0l^*_1l^*_2l^*_3l^*_4l^*_5)$, $(l_il_jl_k)$ and $(l^*_il^*_jl^*_k)$ instead of $r_3(l_0,l_1,l_2,l_3,l_4,l_5)$, $r_{3,\varepsilon}(l^*_0,l^*_1,l^*_2,l^*_3,l^*_4,l^*_4)$, $\Delta(l_i,l_j,l_k)$ and $\Delta(l^*_i,l^*_j,l^*_k)$ respectively.
\begin{align}\label{4b1}
&\partial_\varepsilon\circ\tau^3_{2\varepsilon}(l^*_0\ldots l^*_5)\notag\\
=&\frac{2}{45}\text{Alt}_6\left\lbrace\left\langle r_3(l_0\ldots l_5);r_{3,\varepsilon}(l^*_0\ldots l^*_5)\right]_2\otimes r_3(l_0\ldots l_5)+\frac{r_{3,\varepsilon}(l^*_0\ldots l^*_5)}{r_3(l_0\ldots l_5)}\otimes \left[r_3(l_0\ldots l_5)\right]_2\right\rbrace
\end{align}
We need to compute the value of $\frac{r_{3,\varepsilon}(l^*_0\ldots l^*_5)}{r_3(l_0\ldots l_5)}$ which is 
\[=\frac{(l^*_0l^*_1l^*_3)_\varepsilon}{(l_0l_1l_3)}+\frac{(l^*_1l^*_2l^*_4)_\varepsilon}{(l_1l_2l_4)}+\frac{(l^*_2l^*_0l^*_5)_\varepsilon}{(l_2l_0l_5)}-\frac{(l^*_0l^*_1l^*_4)_\varepsilon}{(l_0l_1l_4)}-\frac{(l^*_1l^*_2l^*_5)_\varepsilon}{(l_1l_2l_5)}-\frac{(l^*_2l^*_0l^*_3)_\varepsilon}{(l_2l_0l_3)}\]
(\ref{4b1}) can also be written as
\begin{align}
=&\frac{2}{45}\text{Alt}_6\Bigg\{\left\langle r_3(l_0\ldots l_5);r_{3,\varepsilon}(l^*_0\ldots l^*_5)\right]_2\otimes\frac{(l_0l_1l_3)(l_1l_2l_4)(l_2l_0l_5)}{(l_0l_1l_4)(l_1l_2l_5)(l_2l_0l_3)}\notag\\
+&\left(\frac{(l^*_0l^*_1l^*_3)_\varepsilon}{(l_0l_1l_3)}+\frac{(l^*_1l^*_2l^*_4)_\varepsilon}{(l_1l_2l_4)}+\frac{(l^*_2l^*_0l^*_5)_\varepsilon}{(l_2l_0l_5)}-\frac{(l^*_0l^*_1l^*_4)_\varepsilon}{(l_0l_1l_4)}-\frac{(l^*_1l^*_2l^*_5)_\varepsilon}{(l_1l_2l_5)}-\frac{(l^*_2l^*_0l^*_3)_\varepsilon}{(l_2l_0l_3)}\right)\otimes\left[r_3(l_0\ldots l_5)\right]_2\Bigg\}\notag
\end{align}
We will consider here only first part of the above relation.
\[\frac{2}{45}\text{Alt}_6\{\left\langle r_3(l_0\ldots l_5);r_{3,\varepsilon}(l^*_0\ldots l^*_5)\right]_2\otimes\frac{(l_0l_1l_3)(l_1l_2l_4)(l_2l_0l_5)}{(l_0l_1l_4)(l_1l_2l_5)(l_2l_0l_3)}\]
Further,
\begin{align}\label{4b2}
=&\text{Alt}_6\left\lbrace \left\langle r_3(l_0\ldots l_5);r_{3,\varepsilon}(l^*_0\ldots l^*_5)\right]_2\otimes (l_0l_1l_3)\right\rbrace\notag\\
+&\text{Alt}_6\left\lbrace \left\langle r_3(l_0\ldots l_5);r_{3,\varepsilon}(l^*_0\ldots l^*_5)\right]_2\otimes (l_1l_2l_4)\right\rbrace\notag\\
+&\text{Alt}_6\left\lbrace \left\langle r_3(l_0\ldots l_5);r_{3,\varepsilon}(l^*_0\ldots l^*_5)\right]_2\otimes (l_2l_0l_5)\right\rbrace\notag\\
-&\text{Alt}_6\left\lbrace \left\langle r_3(l_0\ldots l_5);r_{3,\varepsilon}(l^*_0\ldots l^*_5)\right]_2\otimes (l_0l_1l_4)\right\rbrace\notag\\
-&\text{Alt}_6\left\lbrace \left\langle r_3(l_0\ldots l_5);r_{3,\varepsilon}(l^*_0\ldots l^*_5)\right]_2\otimes (l_1l_2l_5)\right\rbrace\notag\\
-&\text{Alt}_6\left\lbrace \left\langle r_3(l_0\ldots l_5);r_{3,\varepsilon}(l^*_0\ldots l^*_5)\right]_2\otimes (l_2l_0l_3)\right\rbrace
\end{align}
We use the even cycle $(l_0l_1l_2)(l_3l_4l_5)$ ( or  $(l^*_0l^*_1l^*_2)(l^*_3l^*_4l^*_5)$) to obtain
\begin{align}
&\text{Alt}_6\left\lbrace \left\langle r_3(l_0l_1l_2l_3l_4l_5);r_{3,\varepsilon}(l^*_0l^*_1l^*_2l^*_3l^*_4l^*_5)\right]_2\otimes (l_0l_1l_3)\right\rbrace\notag\\
=&\text{Alt}_6\left\lbrace \left\langle r_3(l_1l_2l_0l_4l_5l_3);r_{3,\varepsilon}(l^*_1l^*_2l^*_0l^*_4l^*_5l^*_3)\right]_2\otimes (l_1l_2l_4)\right\rbrace\notag
\end{align}

We can also use here the symmetry 
\[\left\langle r_3(l_0l_1l_2l_3l_4l_5);r_{3,\varepsilon}(l^*_0l^*_1l^*_2l^*_3l^*_4l^*_5)\right]_2=\left\langle r_3(l_1l_2l_0l_4l_5l_3);r_{3,\varepsilon}(l^*_1l^*_2l^*_0l^*_4l^*_5l^*_3)\right]_2\]
since
\[r_{3,\varepsilon}(l^*_0l^*_1l^*_2l^*_3l^*_4l^*_5)=r_{3,\varepsilon}(l^*_1l^*_2l^*_0l^*_4l^*_5l^*_3)\quad\text{precisely both have the same factors}\]
and similar for the others as well so that (\ref{4b2}) can be written as
\begin{align}
=\frac{2}{15}\text{Alt}_6\Big\{ &\left\langle r_3(l_0l_1l_2l_3l_4l_5);r_{3,\varepsilon}(l^*_0l^*_1l^*_2l^*_3l^*_4l^*_5)\right]_2\otimes (l_0l_1l_3)\notag\\
-&\left\langle r_3(l_0l_1l_2l_3l_4l_5);r_{3,\varepsilon}(l^*_0l^*_1l^*_2l^*_3l^*_4l^*_5)\right]_2\otimes(l_0l_1l_4)\Big\}\notag
\end{align}
If we apply the odd permutation $(l_3l_4)$ (or $(l^*_3l^*_4)$), then we have
\[=\frac{2}{15}\cdot2\text{Alt}_6\left\lbrace \left\langle r_3(l_0l_1l_2l_3l_4l_5);r_{3,\varepsilon}(l^*_0l^*_1l^*_2l^*_3l^*_4l^*_5)\right]_2\otimes (l_0l_1l_3)\right\rbrace\]
Again apply an odd permutation $(l_0l_3)$ ( or $(l^*_0l^*_3)$)
\begin{align}
=\frac{2}{15}\text{Alt}_6\Big\{&\left\langle r_3(l_0l_1l_2l_3l_4l_5);r_{3,\varepsilon}(l^*_0l^*_1l^*_2l^*_3l^*_4l^*_5)\right]_2\otimes (l_0l_1l_3)\notag\\
-&\left\langle r_3(l_3l_1l_2l_0l_4l_5);r_{3,\varepsilon}(l^*_3l^*_1l^*_2l^*_0l^*_4l^*_5)\right]_2\otimes (l_3l_1l_0)\Big\}\notag
\end{align}
but up to 2-torsion, which we ignore here, we have $(l_0l_1l_3)=(l_3l_1l_0)$ and then the above will become
\begin{align}\label{4b21}
=\frac{2}{15}\text{Alt}_6\Big\{\Big(&\left\langle r_3(l_0l_1l_2l_3l_4l_5);r_{3,\varepsilon}(l^*_0l^*_1l^*_2l^*_3l^*_4l^*_5)\right]_2\notag\\
-&\left\langle r_3(l_3l_1l_2l_0l_4l_5);r_{3,\varepsilon}(l^*_3l^*_1l^*_2l^*_0l^*_4l^*_5)\right]_2\Big)\otimes (l_0l_1l_3)\Big\}
\end{align}
Recall from (ref of first required) that the triple-ratio \[r_3(l_0l_1l_2l_3l_4l_5)=\frac{(l_0l_1l_3)(l_1l_2l_4)(l_2l_0l_5)}{(l_0l_1l_4)(l_1l_2l_5)(l_2l_0l_3)}\] can be written as the ratio of two projected cross-ratios.

We will see here that $r_{3,\varepsilon}(l^*_0l^*_1l^*_2l^*_3l^*_4l^*_5)$ can also be converted into the ratio of two first order cross-ratios. 

Let $a$ and $b$ be two projected cross-ratios whose ratio is the triple-ratio \[r_3(l_0l_1l_2l_3l_4l_5)=\frac{(l_0l_1l_3)(l_1l_2l_4)(l_2l_0l_5)}{(l_0l_1l_4)(l_1l_2l_5)(l_2l_0l_3)}\]
then $r_{3,\varepsilon}(l^*_0l^*_1l^*_2l^*_3l^*_4l^*_5)$ will be written as $\left(\frac{a^*}{b^*}\right)_\varepsilon$. Since we can also write as
\[\textbf{r}_3(l^*_0l^*_1l^*_2l^*_3l^*_4l^*_5)=r_3(l_0l_1l_2l_3l_4l_5)+r_{3,\varepsilon}(l^*_0l^*_1l^*_2l^*_3l^*_4l^*_5)\varepsilon\]
or
\[\textbf{r}_3(l^*_0l^*_1l^*_2l^*_3l^*_4l^*_5)=r_3(l_0l_1l_2l_3l_4l_5)+\left(\textbf{r}_3(l^*_0l^*_1l^*_2l^*_3l^*_4l^*_5)\right)_\varepsilon\varepsilon\]
we get 
\[r_{3,\varepsilon}(l^*_0l^*_1l^*_2l^*_3l^*_4l^*_5)=\left(\frac{(l^*_0l^*_1l^*_3)(l^*_1l^*_2l^*_4)(l^*_2l^*_0l^*_5)}{(l^*_0l^*_1l^*_4)(l^*_1l^*_2l^*_5)(l^*_2l^*_0l^*_3)}\right)_\varepsilon\]
Now it is clear that $r_{3,\varepsilon}(l^*_0l^*_1l^*_2l^*_3l^*_4l^*_5)$ can also be written as the ratio or product of two projected cross-ratios. There are exactly three ways to write it (projected by $(l^*_0\text{ and }l^*_1)$, $(l^*_1\text{ and }l^*_2)$ and $(l^*_0\text{ and }l^*_2)$) but we will use here $l^*_1$ and $l^*_2$. The last expression can be written as
\[r_{3,\varepsilon}(l^*_0l^*_1l^*_2l^*_3l^*_4l^*_5)=\left(\frac{\textbf{r}(l^*_2|l^*_1l^*_0l^*_5l^*_3)}{\textbf{r}(l^*_1|l^*_0l^*_2l^*_3l^*_4)}\right)_\varepsilon\]
and (\ref{4b21}) can be written as
\begin{align}
=\frac{2}{15}\text{Alt}_6\Bigg\{&\left\langle \frac{r(l_2|l_1l_0l_5l_3)}{r(l_1|l_0l_2l_3l_4)};\left(\frac{\textbf{r}(l^*_2|l^*_1l^*_0l^*_5l^*_3)}{\textbf{r}(l^*_1|l^*_0l^*_2l^*_3l^*_4)}\right)_\varepsilon\right]_2\otimes (l_0l_1l_3)\notag\\
-&\left\langle\frac{r(l_2|l_1l_3l_5l_0)}{r(l_1|l_3l_2l_0l_4)};\left( \frac{\textbf{r}(l^*_2|l^*_1l^*_3l^*_5l^*_0)}{\textbf{r}(l^*_1|l^*_3l^*_2l^*_0l^*_4)}\right)_\varepsilon\right]_2\otimes (l_0l_1l_3)\Bigg\}\notag
\end{align}
Applying five-term relations in $T\mathcal{B}_2(F)$ which are analogous to the one in (\ref{t5term}).
\begin{align}\label{4b3}
=&\frac{2}{15}\text{Alt}_6\{\Bigg(\left\langle r(l_2|l_1l_0l_5l_3);r_\varepsilon(l^*_2|l^*_1l^*_0l^*_5l^*_3)\right]_2-\left\langle r(l_1|l_0l_2l_3l_4);r_\varepsilon(l^*_1|l^*_0l^*_2l^*_3l^*_4)\right]_2\notag\\
-&\left\langle \frac{r(l_2|l_1l_5l_3l_0)}{r(l_1|l_0l_3l_4l_2)};\left(\frac{\textbf{r}(l^*_2|l^*_1l^*_5l^*_3l^*_0)}{\textbf{r}(l^*_1|l^*_0l^*_3l^*_4l^*_2)}\right)_\varepsilon\right]_2\Bigg)\otimes (l_0l_1l_3)\}
\end{align}
For each individual determinant, e.g. $(l_0l_1l_3)$ will have three terms. First consider the third term of (\ref{4b3})
\begin{align}
&\frac{2}{15}\text{Alt}_6\left\lbrace \left\langle \frac{r(l_2|l_1l_5l_3l_0)}{r(l_1|l_0l_3l_4l_2)};\left(\frac{\textbf{r}(l^*_2|l^*_1l^*_5l^*_3l^*_0)}{\textbf{r}(l^*_1|l^*_0l^*_3l^*_4l^*_2)}\right)_\varepsilon\right]_2\otimes(l_0l_1l_3)\right\rbrace\notag\\
=&\frac{2}{15}\text{Alt}_6\left\lbrace \frac{1}{36}\text{Alt}_{(l_0l_1l_3)(l_2l_4l_5)}\left(\left\langle \frac{\textbf{r}(l_2|l_1l_5l_3l_0)}{r(l_1|l_0l_3l_4l_2)};\left(\frac{\textbf{r}(l^*_2|l^*_1l^*_5l^*_3l^*_0)}{\textbf{r}(l^*_1|l^*_0l^*_3l^*_4l^*_2)}\right)_\varepsilon\right]_2\otimes(l_0l_1l_3)\right)\right\rbrace\notag
\end{align}
We need a subgroup in $S_6$ which fixes $(l_0l_1l_3)$ as a determinant i.e. $(l_0l_1l_3)\sim(l_3l_1l_0)\sim(l_3l_0l_1)\cdots$

Here in this case $S_3$ permuting $\{l_0,l_1,l_3\}$ and another one permuting $\{l_2,l_4,l_5\}$ i.e. $S_3\times S_3$. Now consider
\begin{align}
&\text{Alt}_{(l_0l_1l_3)(l_2l_4l_5)}\left\lbrace \left\langle \frac{r(l_2|l_1l_5l_3l_0)}{r(l_1|l_0l_3l_4l_2)};\left(\frac{\textbf{r}(l^*_2|l^*_1l^*_5l^*_3l^*_0)}{\textbf{r}(l^*_1|l^*_0l^*_3l^*_4l^*_2)}\right)_\varepsilon\right]_2\otimes(l_0l_1l_3)\right\rbrace\notag\\
=&\text{Alt}_{(l_0l_1l_3)(l_2l_4l_5)}\left\lbrace \left\langle\frac{(l_2l_5l_3)(l_1l_0l_4)}{(l_2l_5l_0)(l_1l_3l_4)};\left(\frac{(l^*_2l^*_5l^*_3)(l^*_1l^*_0l^*_4)}{(l^*_2l^*_5l^*_0)(l^*_1l^*_3l^*_4)}\right)_\varepsilon\right]_2\otimes(l_0l_1l_3)\right\rbrace\notag
\end{align}
By using odd permutation $(l_2l_5)$ the above becomes zero.

then (\ref{4b3}) becomes
\begin{align}\label{4b4}
=&\frac{2}{15}\text{Alt}_6\left\lbrace \left(\left\langle r(l_2|l_1l_0l_5l_3);r_\varepsilon(l^*_2|l^*_1l^*_0l^*_5l^*_3)\right]_2-\left\langle r(l_1|l_0l_2l_3l_4);r_\varepsilon(l^*_1|l^*_0l^*_2l^*_3l^*_4)\right]_2\right)\otimes (l_0l_1l_3)\right\rbrace
\end{align}
Consider the first term now,
\[\frac{2}{15}\text{Alt}_6\left\lbrace \left\langle r(l_2|l_1l_0l_5l_3);r_\varepsilon(l^*_2|l^*_1l^*_0l^*_5l^*_3)\right]_2\otimes (l_0l_1l_3)\right\rbrace\]
\[=\frac{2}{15}\text{Alt}_6\left\lbrace\frac{1}{36}\text{Alt}_{(l_0l_1l_3)(l_2l_4l_5)}\left\lbrace \left\langle r(l_2|l_1l_0l_5l_3);r_\varepsilon(l^*_2|l^*_1l^*_0l^*_5l^*_3)\right]_2\otimes (l_0l_1l_3)\right\rbrace\right\rbrace\]
The permutation $(l_0l_2l_3)$ does not have any role because the ratio is projected by 2. So, it will be reduced to $S_3$.
\[=\frac{2}{15}\text{Alt}_6\left\lbrace\frac{1}{6}\text{Alt}_{(l_2l_4l_5)}\left\lbrace \left\langle r(l_2|l_1l_0l_5l_3);r_\varepsilon(l^*_2|l^*_1l^*_0l^*_5l^*_3)\right]_2\otimes (l_0l_1l_3)\right\rbrace\right\rbrace\]
Write all possible  inner alternation, then
\begin{align}
=\frac{1}{45}\text{Alt}_6\Bigg\{\Bigg(&\left\langle r(l_4|l_1l_0l_2l_3);r_\varepsilon(l^*_4|l^*_1l^*_0l^*_2l^*_3)\right]_2-\left\langle r(l_2|l_1l_0l_4l_3);r_\varepsilon(l^*_2|l^*_1l^*_0l^*_4l^*_3)\right]_2\notag\\ 
+&\left\langle r(l_5|l_1l_0l_4l_3);r_\varepsilon(l^*_5|l^*_1l^*_0l^*_4l^*_3)\right]_2-\left\langle r(l_4|l_1l_0l_5l_3);r_\varepsilon(l^*_4|l^*_1l^*_0l^*_5l^*_3)\right]_2\notag\\
+&\left\langle r(l_2|l_1l_0l_5l_3);r_\varepsilon(l^*_2|l^*_1l^*_0l^*_5l^*_3)\right]_2-\left\langle r(l_5|l_1l_0l_2l_3);r_\varepsilon(l^*_5|l^*_1l^*_0l^*_2l^*_3)\right]_2\Bigg)\otimes (l_0l_1l_3)\Bigg\}\notag
\end{align}
Now we can use projected five-term relation for $T\mathcal{B}_2(F)$ here, 
\begin{align}\label{4b5}
=\frac{1}{45}\text{Alt}_6\Bigg\{\Bigg(&\left\langle r(l_0|l_1l_2l_3l_4);r_\varepsilon(l^*_0|l^*_1l^*_2l^*_3l^*_4)\right]_2-\left\langle r(l_1|l_0l_2l_3l_4);r_\varepsilon(l^*_1|l^*_0l^*_2l^*_3l^*_4)\right]_2\notag\\
-&\left\langle r(l_3|l_0l_1l_2l_4);r_\varepsilon(l^*_3|l^*_0l^*_1l^*_2l^*_4)\right]_2+\left\langle r(l_0|l_1l_4l_3l_5);r_\varepsilon(l^*_0|l^*_1l^*_4l^*_3l^*_5)\right]_2\notag\\
-&\left\langle r(l_1|l_0l_4l_3l_5);r_\varepsilon(l^*_1|l^*_0l^*_4l^*_3l^*_5)\right]_2-\left\langle r(l_3|l_0l_1l_4l_5);r_\varepsilon(l^*_3|l^*_0l^*_1l^*_4l^*_5)\right]_2\notag\\
+&\left\langle r(l_0|l_1l_5l_3l_2);r_\varepsilon(l^*_0|l^*_1l^*_5l^*_3l^*_2)\right]_2-\left\langle r(l_1|l_0l_5l_3l_2);r_\varepsilon(l^*_1|l^*_0l^*_5l^*_3l^*_2)\right]_2\notag\\
-&\left\langle r(l_3|l_0l_1l_5l_2);r_\varepsilon(l^*_3|l^*_0l^*_1l^*_5l^*_2)\right]_2\Bigg)\otimes (l_0l_1l_3)\Bigg\}\notag\\
&\text{Use the cycle $(l_0l_1l_3)(l_2l_4l_5)$ then we get}\notag\\
=\frac{1}{45}\cdot9\text{Alt}_6&\left\lbrace\left\langle r(l_0|l_1l_2l_3l_4);r_\varepsilon(l^*_0|l^*_1l^*_2l^*_3l^*_4)\right]_2\otimes (l_0l_1l_3)\right\rbrace
\end{align}
The second term of (\ref{4b4}) can be written as 
\[\frac{1}{45}\cdot-6\text{Alt}_6\left\lbrace\left\langle r(l_1|l_0l_2l_3l_4);r_\varepsilon(l^*_1|l^*_0l^*_2l^*_3l^*_4)\right]_2\otimes (l_0l_1l_3)\right\rbrace\]
(\ref{4b5}) can be combined with the above so we get
\begin{align}
=\frac{1}{45}\text{Alt}_6\left\lbrace\left(9\left\langle r(l_0|l_1l_2l_3l_4);r_\varepsilon(l^*_0|l^*_1l^*_2l^*_3l^*_4)\right]_2-6\left\langle r(l_1|l_0l_2l_3l_4);r_\varepsilon(l^*_1|l^*_0l^*_2l^*_3l^*_4)\right]_2\right)\otimes (l_0l_1l_3)\right\rbrace
\end{align}
Use the permutation $(l_0l_1l_3)(l_2l_4l_5)$ to get
\[=\frac{1}{3}\text{Alt}_6\left\lbrace\left\langle r(l_0|l_1l_2l_3l_4);r_\varepsilon(l^*_0|l^*_1l^*_2l^*_3l^*_4)\right]_2\otimes (l_0l_1l_3)\right\rbrace\]
Since $\mathcal{B}_2(F)$ satisfies five-term relation then we can write the following.
\begin{align}\label{4b6}
=\frac{1}{3}\text{Alt}_6\left\lbrace \left\langle r(l_0|l_1l_2l_3l_4);r_\varepsilon(l^*_0|l^*_1l^*_2l^*_3l^*_4)\right]_2\otimes (l_0l_1l_3)+\frac{(l^*_0l^*_1l^*_3)_\varepsilon}{(l_0l_1l_3)}\otimes\left[r(l_0|l_1l_2l_3l_4)\right]_2\right\rbrace
\end{align}

Now go to the other side. Map $\tau^3_{1,\varepsilon}$ can also be written in the alternation sum form
\begin{align}
\tau^3_{1,\varepsilon}(l^*_0\ldots l^*_4)=\frac{1}{3}\text{Alt}\{&\left\langle r(l_0|l_1l_2l_3l_4);r_\varepsilon(l^*_0|l^*_1l^*_2l^*_3l^*_4)\right]_2\otimes(l_0l_1l_2)\notag\\
+&\frac{(l^*_0l^*_1l^*_2)_\varepsilon}{(l_0l_1l_2)}\otimes [r(l_0|l_1l_2l_3l_4)]_2\}\notag
\end{align}
Compute $\tau^3_{1,\varepsilon}\circ d(l^*_0\ldots l^*_5)$ and apply cycle $(l_0l_1l_2l_3l_4l_5)$ for $d$ and then expand Alt$_5$ from the definition of $\tau^3_{1,\varepsilon}$ so we get
\begin{align}
\tau^3_{1\varepsilon}\circ d(l^*_0\ldots l^*_5)=\frac{1}{3}\text{Alt}_6\{&\left\langle r(l_0|l_1l_2l_3l_4);r_\varepsilon(l^*_0|l^*_1l^*_2l^*_3l^*_4)\right]_2\otimes(l_0l_1l_2)\notag\\
+&\frac{(l^*_0l^*_1l^*_2)_\varepsilon}{(l_0l_1l_2)}\otimes [r(l_0|l_1l_2l_3l_4)]_2\}\notag
\end{align}
Use the odd permutation $(l_2l_3)$, then
\begin{align}
=-\frac{1}{3}\text{Alt}_6\{&\left\langle r(l_0|l_1l_3l_2l_4);r_\varepsilon(l^*_0|l^*_1l^*_3l^*_2l^*_4)\right]_2\otimes(l_0l_1l_3)\notag\\
+&\frac{(l^*_0l^*_1l^*_3)_\varepsilon}{(l_0l_1l_3)}\otimes [r(l_0|l_1l_3l_2l_4)]_2\}\notag
\end{align}
Finally use two-term relation in $T\mathcal{B}_2(F)$ and $\mathcal{B}_2(F)$ to get the correct sign. The final answer will be the same as (\ref{4b6})
\end{proof}

If we combine Theorem \ref{claim4a} and \ref{claim4b}, then we see that the diagram \eqref{bicomp3b} is commutative and have maps of morphisms between the Grassmannian complex and the tangential complex for weight 3. Here we have some results 
\begin{prop}\label{zero3}
The map $C_5(\varmathbb{A}^4_{F[\varepsilon]_2})\xrightarrow{d'}C_4(\varmathbb{A}^3_{F[\varepsilon]_2})\xrightarrow{\tau^3_{0,\varepsilon}}\left(F\otimes \bigwedge^2 F^\times\right)\oplus \left(\bigwedge^3F\right)$ is zero.
\end{prop}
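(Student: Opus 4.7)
The plan is to mirror the direct computation employed in the weight-$2$ analogue (Proposition \ref{zero2}), now working inside $V_4$ rather than $V_3$. Write $\omega \in \det V_4^*$ for a chosen volume form, so $\Delta(l_i,l_j,l_k,l_m)=\langle \omega, l_i\wedge l_j\wedge l_k\wedge l_m\rangle$ and the induced form $\Delta(l_i,\cdot,\cdot,\cdot)$ is a volume form on the quotient $V_4/\langle l_i\rangle \cong V_3$. Under this identification, the $3\times 3$ determinants appearing inside $\tau^3_{0,\varepsilon}(l^*_i \mid l^*_0,\ldots,\hat{l}^*_i,\ldots,l^*_4)$ become the $4\times 4$ minors $\Delta(l^*_i,l^*_a,l^*_b,l^*_c)$, and their $\varepsilon$-parts unfold as $\Delta(l^*_i,l^*_a,l^*_b,l^*_c)_{\varepsilon}=\Delta(l_{i,\varepsilon},l_a,l_b,l_c)+\cdots+\Delta(l_i,l_a,l_b,l_{c,\varepsilon})$.

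I would then split $\tau^3_{0,\varepsilon} = \tau^{(1)} + \tau^{(2)}$ into its $F\otimes\bigwedge^2 F^\times$ component and its $\bigwedge^3 F$ component and handle each part independently. Expanding $\tau^3_{0,\varepsilon}\circ d'(l^*_0,\ldots,l^*_4) = \sum_{i=0}^4 (-1)^i\tau^3_{0,\varepsilon}(l^*_i\mid l^*_0,\ldots,\hat{l}^*_i,\ldots,l^*_4)$, the key observation is that every $4\times 4$ minor $\Delta(l_{i_1},l_{i_2},l_{i_3},l_{i_4})$ formed from four of the five points appears in exactly those four projected configurations whose projecting index lies in $\{i_1,i_2,i_3,i_4\}$. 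I would therefore collect terms according to which $4\times 4$ minor carries the $\varepsilon$-part and verify cancellation inside each group using the sign pattern inherited from $d'$.

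For the $\bigwedge^3 F$-component the cancellation is essentially formal: the three-fold wedge of ratios $\frac{\Delta(\ldots)_\varepsilon}{\Delta(\ldots)}$ is totally antisymmetric, and the combination of the outer $d'$-signs with the inner alternating sum inside $\tau^{(2)}$ pairs every term with its opposite. For the $F\otimes\bigwedge^2 F^\times$-component one groups the resulting terms by the common $4\times 4$ minor in the tensor slot; after fixing this slot, the remaining wedge of two ratios collapses using the Pl\"ucker-style identity \eqref{genid} read in the quotient $V_4/\langle l_i\rangle$, together with its $\varepsilon$-variant \eqref{newid}.

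The main obstacle is combinatorial rather than conceptual: the five outer projections, each contributing four $(-1)^i$ summands of three wedge-terms apiece, produce on the order of sixty individual terms in the $\tau^{(1)}$-sum, which must be partitioned into small packets that cancel. I expect no new identity beyond \eqref{2did} and its $\varepsilon$-analogue \eqref{newid} to be required, and the argument essentially unfolds the bookkeeping that the author of Proposition \ref{zero2} left implicit when writing ``direct calculation''.
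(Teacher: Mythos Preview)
Your plan---setting up the volume form on $V_4$, splitting $\tau^3_{0,\varepsilon}=\tau^{(1)}+\tau^{(2)}$, and expanding the composite into a finite list of terms to be cancelled---is exactly the paper's proof, down to the term count (twelve inner terms for $\tau^{(1)}$, sixty after the outer alternation; four inner terms for $\tau^{(2)}$).

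The one deviation worth flagging is your expectation that the $\tau^{(1)}$-cancellation will rely on the Siegel identity \eqref{genid} and its $\varepsilon$-analogue \eqref{newid}. It does not, and in fact cannot: the second tensor factor is $\bigwedge^2 F^\times$, a multiplicative object, where an additive relation among determinants has no purchase (and \eqref{genid} is a $2\times 2$ identity in any case, whereas the minors occurring here are $4\times 4$). The paper reports that both components vanish by \emph{direct} term-by-term cancellation---every term is matched with its opposite under the combined sign pattern of $d'$ and the inner alternating sum, with no algebraic identity invoked. The same purely combinatorial mechanism drives the general-weight statement in Proposition~\ref{zeron}. So when you carry out the computation you will find the bookkeeping simpler than you anticipate.
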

\begin{proof}
The proof of this lemma is direct by calculation. Let $(l^*_0,\ldots,l^*_4) \in C_5(\varmathbb{A}^4_{F[\varepsilon]_2})$
Where 
\[l^*_i=\left(\begin{array}{c} a+a_\varepsilon\varepsilon\\b+b_\varepsilon\varepsilon\\c+c_\varepsilon\varepsilon\\d+d_\varepsilon\varepsilon \end{array}                                                                                                                                          \right)=\left(\begin{array}{c}a\\b\\c\\d\end{array}\right)+\left(\begin{array}{c}a_\varepsilon\\b_\varepsilon\\c_\varepsilon\\d_\varepsilon\varepsilon \end{array}\right)=l_i+l_{i\varepsilon}\varepsilon\]
Let $\omega$ be the volume formed in four-dimensional vector space, and $\Delta(l_i,\cdot,\cdot,\cdot)$ be the volume form in $V_4/\langle l_i \rangle$.
\begin{align}
&\tau^3_{0,\varepsilon}\circ d'(l^*_0,\ldots,l^*_4)\notag\\
=&\tau^3_{0,\varepsilon}\Bigg(\sum^4_{i=0}(-1)^i(l^*_i|l^*_0,\ldots,\hat{l}^*_i,\ldots,l^*_4)\Bigg)\notag\\
&\text{Consider the first coordinate of the map first}\notag\\
=&\widetilde{\text{Alt}}_{(01234)}\Bigg(\sum_{i=0}^3(-1)^{i}\Big(\frac{\Delta(l^*_0,\ldots,\hat{l}^*_i,\ldots,l^*_3,l^*_4)_\varepsilon}{\Delta(l_0,\ldots,\hat{l}_i,\ldots,l_3,l_4)}\otimes\frac{\Delta(l_0,\ldots,\hat{l}_{i+1},\ldots,l_3,l_4)}{\Delta(l_0,\ldots,\hat{l}_{i+2},\ldots,l_3,l_4)}\notag\\
&\quad\quad\quad\wedge\frac{\Delta(l_0,\ldots,\hat{l}_{i+3},\ldots,l_3,l_4)}{\Delta(l_0,\ldots,\hat{l}_{i+2},\ldots,l_3,l_4)}\Big)\quad i\mod 4\Bigg)
\end{align}
First, we expand inner sum which gives us 12 different terms after simplification. When we apply alternation sum then we get 60 terms and there is direct cancellation which leads to zero. Now consider the second coordinate , which gives us
\[\widetilde{\text{Alt}}_{(01234)}\left(\sum^3_{i=0}(-1)^i\bigwedge^3_{\substack{j=0\\ j\neq i}}\frac{\Delta(l^*_0,\ldots,\hat{l}^*_j,\ldots,l^*_3,l^*_4)_\varepsilon}{\Delta(l_0,\ldots,\hat{l}_j,\ldots,l_3,l_4)}\right)\]
Again if we expand inner sum first, then we get only four different terms but after the application of alternation we get zero.
\end{proof}

As an analogy of Proposition \ref{zero3} in higher weight, we present the following result
\begin{prop}\label{zeron}
The map $C_{n+2}(\varmathbb{A}^{n+1}_{F[\varepsilon]_2})\xrightarrow{d'}C_{n+1}(\varmathbb{A}^{n}_{F[\varepsilon]_2})\xrightarrow{\tau^n_{0,\varepsilon}}\left(F\otimes \bigwedge^{n-1} F^\times\right)\oplus\left(\bigwedge^nF\right)$ is zero, where
\begin{align}
\tau^n_{0,\varepsilon}(l^*_0,\ldots,l^*_{n})\notag\\
=\sum_{i=0}^{n}(-1)^{i}\Bigg(&\frac{\Delta(l^*_0,\ldots,\hat{l}^*_i,\ldots,l^*_{n})_\varepsilon}{\Delta(l_0,\ldots,\hat{l}_i,\ldots,l_{n})}\otimes\frac{\Delta(l_0,\ldots,\hat{l}_{i+1},\ldots,l_{n})}{\Delta(l_0,\ldots,\hat{l}_{i+2},\ldots,l_{n})}\notag\\
&\wedge\cdots\wedge\frac{\Delta(l_0,\ldots,\hat{l}_{i+(n-1)},\ldots,l_{n})}{\Delta(l_0,\ldots,\hat{l}_{i+n},\ldots,l_{n})}\Bigg)+\Bigg(
\bigwedge_{\substack{j=0\\j\neq i}}^{n}\frac{\Delta(l^*_0,\ldots,\hat{l}^*_j,\ldots,l^*_{n})_\varepsilon}{\Delta(l_0,\ldots,\hat{l}_j,\ldots,l_{n})}\Bigg),\notag\\
&\qquad\qquad\qquad\qquad\qquad\qquad\qquad\qquad\qquad\quad i\mod (n+1)\notag
\end{align}
\end{prop}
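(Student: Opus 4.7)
The plan is to mimic the proof of Proposition \ref{zero3}, which handled the case $n=3$, and extend the cancellation argument uniformly in $n$. First I would expand
\[d'(l^*_0,\ldots,l^*_{n+1}) = \sum_{k=0}^{n+1}(-1)^k \bigl(l^*_k \mid l^*_0,\ldots,\hat{l}^*_k,\ldots,l^*_{n+1}\bigr),\]
where the projection from $l^*_k$ replaces the $(n+1)$-dimensional volume $\Delta(l^*_{i_0},\ldots,l^*_{i_n})$ by the $n$-dimensional volume on $V_{n+1}/\langle l_k\rangle$, i.e.\ by $\Delta(l^*_k,l^*_{i_0},\ldots,l^*_{i_n})$ up to an overall factor that cancels in every ratio appearing in $\tau^n_{0,\varepsilon}$. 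Writing $\tau^n_{0,\varepsilon}=\tau^{(1)}+\tau^{(2)}$ for its projections to $F\otimes\bigwedge^{n-1}F^\times$ and $\bigwedge^n F$ respectively, the composition becomes a double alternating sum indexed by the projection index $k\in\{0,\ldots,n+1\}$ and the omitted index $i\in\{0,\ldots,n+1\}\setminus\{k\}$ appearing in the definition of $\tau^n_{0,\varepsilon}$.

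For the wedge part $\tau^{(2)}\circ d'$, each summand is a pure wedge of $n$ ratios of the form $\tfrac{\Delta(l^*_k,l^*_0,\ldots,\hat{l}^*_k,\ldots,\hat{l}^*_j,\ldots,l^*_{n+1})_\varepsilon}{\Delta(l_k,l_0,\ldots,\hat{l}_k,\ldots,\hat{l}_j,\ldots,l_{n+1})}$. Observing that the wedge depends only on the unordered pair $\{k,i\}$ of the projection index and the index omitted by $\tau^{(1)}$ (or, more generally, only on the cyclic sequence of omitted indices in the wedge), I would show that for each such wedge the contributions from the two choices of which index plays the role of $k$ cancel thanks to the sign $(-1)^k\cdot(-1)^i$ and the antisymmetry of $\bigwedge^n$; this is the natural generalisation of the $n=3$ cancellation noted in the proof of Proposition \ref{zero3}.

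For the tensor part $\tau^{(1)}\circ d'$, the argument is the same in spirit but combinatorially heavier. Each summand has the shape $\tfrac{\Delta(\cdots)_\varepsilon}{\Delta(\cdots)}\otimes\omega$, with $\omega\in\bigwedge^{n-1}F^\times$ a wedge of ratios of $(n-1)$ determinants. I would group terms by the ratio appearing on the left of the tensor, so that for each such ratio one is left with a sum of $\bigwedge^{n-1}$-terms coming from different projection indices; invoking the Plücker-type identity that expresses any determinant $\Delta(l_{i_0},\ldots,l_{i_{n-1}})$ as an alternating combination when one index is adjoined and then removed forces these groups to cancel. As in the $n=3$ case, this reduces to the identity $\sum_{k}(-1)^k\,\Delta(l_{i_0},\ldots,\widehat{l_{i_k}},\ldots)\wedge\cdots=0$ applied at each slot of the wedge.

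The main obstacle is the bookkeeping in the tensor component: one has to verify that after the double alternation, the terms pair up exactly in the way dictated by the implicit Plücker relation, without producing spurious residues. I expect to handle this by fixing an arbitrary ratio $\tfrac{\Delta(l^*_{i_0},\ldots,l^*_{i_{n-1}})_\varepsilon}{\Delta(l_{i_0},\ldots,l_{i_{n-1}})}$ on the left of $\otimes$ and tracking, over all pairs $(k,i)$, which values of the projection index $k$ and omitted index $i$ produce this ratio; each such ratio will appear with coefficient that is an alternating sum of wedges, which vanishes by the same $\bigwedge$-linear Plücker-type relation used above. The routine but lengthy verification then concludes the proof.
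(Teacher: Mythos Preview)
Your overall plan---split $\tau^n_{0,\varepsilon}=\tau^{(1)}+\tau^{(2)}$, expand the double alternating sum over the projection index $k$ and the omitted index $i$, then cancel---is exactly the paper's. Your pairing argument for the wedge component $\tau^{(2)}$ is correct and amounts to the paper's ``direct cancellation'': the $n$-fold wedge over $j\in\{0,\ldots,n+1\}\setminus\{k,i\}$ depends only on the unordered pair $\{k,i\}$, and the combined sign $(-1)^k(-1)^{i'}$ (with $i'$ the position of $i$ after deleting $k$) flips under $k\leftrightarrow i$.

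The one misleading step is in your treatment of $\tau^{(1)}$. You invoke a ``Pl\"ucker-type identity'' and write a putative relation $\sum_k(-1)^k\Delta(l_{i_0},\ldots,\widehat{l_{i_k}},\ldots)\wedge\cdots=0$. But the target is $F\otimes\bigwedge^{n-1}F^\times$, and in $\bigwedge^{n-1}F^\times$ the determinants enter only as \emph{units}; Pl\"ucker or Laplace relations are \emph{additive} identities among determinants in $F$ and have no analogue in the multiplicative group $F^\times$ or in its exterior powers over $\mathbb{Z}$. The paper uses no such identity. After fixing the left tensor factor it simply expands each wedge of ratios $\frac{\Delta(\widehat{i+1})}{\Delta(\widehat{i+2})}\wedge\cdots$ via $\mathbb{Z}$-bilinearity into a signed sum of pure wedges $\Delta(\widehat{j_1})\wedge\cdots\wedge\Delta(\widehat{j_{n-1}})$ and checks that every such monomial occurs with total coefficient zero across the pairs $(k,i)$. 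That combinatorial bookkeeping is the ``routine but lengthy verification'' you already anticipate, and it is the entire argument; drop the Pl\"ucker language and present it as straight term-by-term cancellation, exactly as in the $n=3$ case you cite.
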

\begin{proof}
Let $(l^*_0,\ldots,l^*_{n+1}) \in C_{n+2}(\varmathbb{A}^{n+1}_{F[\varepsilon]_2})$. We have
\[\tau^n_{0,\varepsilon}\circ d'(l^*_0,\ldots,l^*_{n+1})=\tau^n_{0,\varepsilon}\left(\sum^{n}_{i=0}(-1)^i(l^*_i|l^*_0,\ldots,\hat{l}^*_i,\ldots,l^*_{n+1})\right)\]
Now use definition of alternation to represent this sum then we have
\begin{align}\label{eq_zeron_1}
&\tau^n_{0,\varepsilon}\circ d'(l^*_0,\ldots,l^*_{n+1})\notag\\
=&\widetilde{\text{Alt}}_{(0\cdots n+1)}\Bigg\{\sum_{i=0}^{n}(-1)^{i}\Bigg(\Big(\frac{\Delta(l^*_0,\ldots,\hat{l}^*_i,\ldots,l^*_{n},l^*_{n+1})_\varepsilon}{\Delta(l_0,\ldots,\hat{l}_i,\ldots,l_{n},l_{n+1})}\otimes\frac{\Delta(l_0,\ldots,\hat{l}_{i+1},\ldots,l_{n},l_{n+1})}{\Delta(l_0,\ldots,\hat{l}_{i+2},\ldots,l_{n},l_{n+1})}\notag\\
&\wedge\cdots\wedge\frac{\Delta(l_0,\ldots,\hat{l}_{i+(n-1)},\ldots,l_{n},l_{n+1})}{\Delta(l_0,\ldots,\hat{l}_{i+n},\ldots,l_{n},l_{n+1})}\Big)+\Big(
\bigwedge_{\substack{j=0\\j\neq i}}^{n}\frac{\Delta(l^*_0,\ldots,\hat{l}^*_j,\ldots,l^*_{n},l^*_{n+1})_\varepsilon}{\Delta(l_0,\ldots,\hat{l}_j,\ldots,l_{n},l_{n+1})}\Big)\Bigg),\notag\\
&\qquad\qquad\qquad\qquad\qquad\qquad\qquad\qquad\qquad\qquad\qquad\quad i\mod n+1\Bigg\}
\end{align}
First expand the inner sum on first term that gives $n+1$ number of terms. Expand again by using the properties of wedge that gives $n(n+1)$ terms. Apply alternation sum on that gives us $n(n+1)(n+2)$ terms, so there are $n+2$ sets each consisting $n(n+1)$ terms and each term in $n(n+1)$ term  has $n+1$ sets of $n$ terms and good thing is that they cancelled set by set to give zero.

Now expand the inner sum in the second term of (\ref{eq_zeron_1}) that gives $n+1$ terms and then apply alternation sum which gives $n+2$ sets of $n+1$ terms, we find cancellation in the expansion of sum accordingly which gives zero as well.
\end{proof}

\section*{Acknowledgements:}
This article consists on a chapter of author's doctoral thesis written under the supervision of Dr. Herbert Gangl at University of Durham. Author would also like to thanks to Prof. Spencer Bloch for his comments and valuable suggestions.

\end{document}